\definecolor{rouge}{rgb}{1,0,0}
\definecolor{bleu}{rgb}{0,0,1}
\definecolor{vert}{rgb}{0,0.5,0}
\newtheorem{definition}{Definition}
\newtheorem{proposition}{Proposition}
\newtheorem{corollary}{Corollary}
\newtheorem{lemma}{Lemma}
\newtheorem{theorem}{Theorem}
\journal{Journal of Differential Equations}
\newcommand{\cc}{\gamma}
\newcommand{\dis}{\displaystyle}
\newcommand{\dist}{\mbox{\bf  dist}}
\def\HH{{\cal{H}}}
\begin{document}

\begin{frontmatter}

\title{Stability of a critical nonlinear \\neutral delay differential equation}

\author[JAD,INRIA]{S. Junca\corref{cor1}}
\ead{junca@unice.fr}
\author[LMA]{B. Lombard\corref{cor2}}
\ead{lombard@lma.cnrs-mrs.fr}

\cortext[cor2]{Corresponding author}

\address[JAD]{Laboratoire J.A. Dieudonn\'e, UMR 7351 CNRS, 
          Universit\'e de Nice Sophia-Antipolis, Parc Valrose, 06108 Nice Cedex 02, France}
\address[INRIA]{Team Coffee,
INRIA Sophia Antipolis Mediterran\'ee, 
2004 route des Lucioles - BP 93,
06902 Sophia Antipolis Cedex, France}
\address[LMA]{Laboratoire de M\'ecanique et d'Acoustique, UPR 7051 CNRS, 31 chemin Joseph Aiguier, 13402 Marseille, France}

\begin{abstract}  
This work deals with a scalar nonlinear neutral delay differential equation issued from the study of wave propagation. A critical value of the coefficients is considered, where only few results are known. The difficulty follows from the fact that the spectrum of the linear operator is asymptotically closed to the imaginary axis. An analysis based on the energy method provides new results about the asymptotic stability of the constant and periodic solutions. A complete analysis of the stability diagram is given. Lastly, existence of periodic solutions is discussed, involving a Diophantine condition on the period.
\end{abstract}

\begin{keyword}
neutral delay differential equations \sep energy method \sep stability diagram \sep periodic solutions \sep small divisors \sep no exponential stability
\MSC 34K03 \sep 34K40 
\end{keyword}

\end{frontmatter}


\section{Introduction}\label{SecIntro}

In this work, we consider the  following nonlinear neutral delay differential equation (NDDE) and we look for  the solution 
$y \in H^1_{loc}((-1,\, + \infty), \mathbb{R}) $,
\begin{equation}
\left\{
\begin{array}{l}
\displaystyle
y^{'}(t)+c\,y^{'}(t-1)+f(y(t))+g(y(t-1))=s(t), \quad t>0 , \\ 
[8pt]
\displaystyle
y(t)=y_0(t)\in H^1((-1,\,0), \mathbb{R}), \quad -1<t<0,
\end{array}
\right.
\label{ToyModel}
\end{equation}
under the following assumptions:
\begin{subnumcases}{\label{TMhyp}}
\displaystyle
c=\pm 1,\label{Hyp1}\\
[8pt]
\displaystyle
f\in C^1(\mathbb{R},\,\mathbb{R}), \quad f^{'}>0, \quad f(0)=0,\label{TMHyp2}\\
[8pt]
\displaystyle
|g|\leq \cc\,|f|,\, \mbox{ with } 0 \leq \cc<1, \label{TMHyp3}\\
[8pt]
\displaystyle
s(t+T)=s(t),\quad T=\frac{\textstyle 2\,\pi}{\textstyle \omega}>0. \label{TMHyp4}
\end{subnumcases}
In some cases, additional assumptions will be required:
\begin{subnumcases}{\label{H1H2}}
\displaystyle
|g^{'}|\leq \cc\,|f^{'}|,\,\mbox{ with } 0 \leq \cc<1,\label{H1}\\
[8pt]
\displaystyle \lim_{y\rightarrow \pm \infty}f(y)=\pm \infty.\label{H2}
\end{subnumcases}
Equation (\ref{ToyModel}) is typically issued from hyperbolic partial differential equations with nonlinear boundary conditions \cite{Hale03,Erneux09}. A closely related system of neutral equations has been derived, for instance, in the case of elastic wave propagation across two nonlinear cracks \cite{JuLo12}: $y$ is then the dilatation of the crack, the shift 1 is the normalized travel time between the cracks, $f$ and $g$ denote the nonlinear contact law, and $s$ is the $T$-periodic excitation.

A natural issue when dealing with (\ref{ToyModel}) is to prove existence, uniqueness and stability of periodic solutions. Articles and reference books \cite{Lopes75,Erbe95,Das97,Parhi01,Gyori91,Gopalsamy92,KR12} usually address this question by considering a linear version of (\ref{ToyModel}) with $|c|<1$. The critical value $|c|=1$ raises technical difficulties: the spectrum of the linearized operator is asymptotically closed to the left of the imaginary axis, hence exponential stability is lost and small divisors may be encountered.

An old    contribution to the case $|c|=1$ is done in \cite{Gromova67}. In the linear case, algebraic rate of convergence is proven. In the nonlinear case with $c=-1$, algebraic convergence is also proven in this reference, but assuming small $C^1$ data. In both cases, the main  tools involved are asymptotic expansions of characteristic roots, Laplace transforms and function series.
Another recent contribution for linear  degenerate retarded differential systems is given in \cite{Zhou09}.

The aim of our article is to push forwards the analysis of (\ref{ToyModel}) in this critical case, especially to remove (when possible) the assumption of small solutions, and also to consider less regular initial data. A strategy based on energy analysis is followed. In the linear case, a stability analysis shows that the energy method is optimal. When the stability condition is violated, the existence of an essential instability is proven \cite{Hale03,KE}.

The paper is organized as follows. Section \ref{SecNonlinear} treats the full nonlinear NDDE. The stability of the zero solution in the homogeneous case is proven. Generalization to the non-homogeneous case with constant forcing is done. Non-constant periodic forcing is investigated in the case of small solutions. In section \ref{SecLinear}, new results are given in the linear case, concerning the stability diagram and the asymptotic rate of convergence. In section \ref{SecExistence}, the existence of periodic solutions is addressed. For particular values of the period, existence is proven whatever the amplitude of the source. More generally, existence is obtained under a Diophantine condition. This latter condition is satisfied if the period of the forcing is rational: $T=p\,/\,q$; if $p$ is odd, we also exhibit a smoothing effect. Lastly, conclusion is drawn in section \ref{SecConclu}, and some future lines of research are proposed. 


\section{Nonlinear NDDE}\label{SecNonlinear}

\subsection{Stability in the homogeneous case}\label{SecNonlinear0}

Let us introduce basic functions and inequalities useful to state a main result:
\begin{equation}
F(y)=\int_0^y f(z)\,dz,\quad G(y)=\int_0^y g(z)\,dz,\quad H(y)=2\,\left(F(y)-c\,G(y)\right),
\label{FGH}
\end{equation}
with $f$, $g$, $\cc$ and $c$ defined in (\ref{ToyModel})-(\ref{TMhyp}).

\begin{lemma}
For all $y$, $F$ and $H$ satisfy the inequality
\begin{equation}
0\leq 2\,(1-\cc)\,F(y)\leq H(y)\leq 2\,(1+\cc)\,F(y).
\label{FH}
\end{equation}
\label{LemmaFH}
\end{lemma}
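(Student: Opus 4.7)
The plan is to chain two elementary pointwise bounds: first, a positivity bound on $F$, and then an absolute bound $|G(y)|\le\gamma F(y)$, after which the inequality follows from $|c|=1$ and the triangle inequality applied to $H=2F-2cG$.

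First I would check that $F(y)\ge 0$ for every real $y$. Because $f$ is $C^1$ with $f'>0$ and $f(0)=0$, the function $f$ is strictly increasing and changes sign exactly at $0$, so $zf(z)\ge 0$. For $y>0$ the integrand in $F(y)=\int_0^y f$ is nonnegative on $[0,y]$; for $y<0$, writing $F(y)=-\int_y^0 f(z)\,dz$, the integrand is nonpositive on $[y,0]$, giving again $F(y)\ge 0$. Hence $F\ge 0$ everywhere (and $F(0)=0$).

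Next I would bound $|G|$ by $F$. Combining $|g|\le\gamma|f|$ with the sign discussion above yields $|g(z)|\le\gamma|f(z)|=\gamma\,\mathrm{sgn}(z)\,f(z)$, and integrating from $0$ to $y$ gives, in both cases $y>0$ and $y<0$,
\begin{equation*}
|G(y)|\;=\;\Bigl|\int_0^y g(z)\,dz\Bigr|\;\le\;\Bigl|\int_0^y |g(z)|\,dz\Bigr|\;\le\;\gamma\,\Bigl|\int_0^y |f(z)|\,dz\Bigr|\;=\;\gamma\,F(y),
\end{equation*}
where the last equality uses that $|f(z)|=\mathrm{sgn}(z)\,f(z)$ on the interval of integration.

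Finally, since $c=\pm 1$, $|2cG(y)|=2|G(y)|\le 2\gamma F(y)$. Splitting $H(y)=2F(y)-2cG(y)$ yields
\begin{equation*}
2(1-\gamma)F(y)\;\le\;2F(y)-2|G(y)|\;\le\;H(y)\;\le\;2F(y)+2|G(y)|\;\le\;2(1+\gamma)F(y),
\end{equation*}
and the leftmost quantity is nonnegative because $F\ge 0$ and $\gamma<1$. There is no real obstacle here; the only care needed is the sign bookkeeping that lets the inequality $|g|\le\gamma|f|$ be integrated into $|G|\le\gamma F$ uniformly in the sign of $y$, which is exactly what the monotonicity of $f$ together with $f(0)=0$ buys us.
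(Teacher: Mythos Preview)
Your proof is correct and follows essentially the same approach as the paper: both establish $|G(y)|\le\gamma F(y)$ by splitting into the cases $y>0$ and $y<0$ and using that $|f(z)|=\pm f(z)$ on the respective integration intervals, then conclude via $|c|=1$. You are slightly more explicit in isolating $F\ge 0$ as a preliminary step and in spelling out the final triangle-inequality sandwich, but the argument is the same.
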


\begin{proof}
Let assume $y>0$. From (\ref{TMhyp}) and (\ref{FGH}), it follows
$$
|G(y)|\leq \int_0^y |g(z)|\,dz\leq \int_0^y \cc\,|f(z)|\,dz=\int_0^y \cc\,f(z)\,dz=
\cc\,F(y).
$$
Similarly, if $y<z<0$ then $|f(z)|=-f(z)$ and 
$$
|G(y)| \leq \int_y^0 |g(z)|\,dz\leq \int_y^0 \cc\,|f(z)|\,dz=-\int_0^y (-\cc\,f(z))\,dz=\cc\,F(y).
$$
Consequently, one gets $0\leq |G| \leq \cc\,F$ for all $y$. Using (\ref{FGH}) and $c=\pm 1$ concludes the proof.
\end{proof}

\begin{theorem}[Asymptotic stability of the zero solution]
Let $y$ be the solution of (\ref{ToyModel}) without source term: $s=0$. For all $t>0$, one defines
\begin{equation}
E(t)=\int_{t-1}^t \left(y^{'}(\tau)+f(y(\tau))\right)^2\,d\tau\geq 0.
\label{NRJ}
\end{equation}
Then one obtains
\begin{equation}  
\sup_{t>0} E(t)+\sup_{t>0} F(y(t))+\int_0^{+\infty}f(y(t))^2\,d t<+\infty,
\label{inE0}
\end{equation}
where $F$ is defined in (\ref{FGH}). It follows the asymptotic stability of the origin:
\begin{equation}
\lim_{t\rightarrow +\infty}y(t)=0.
\end{equation}
\label{ThZero}
\end{theorem}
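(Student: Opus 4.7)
Set $u(\tau)=y'(\tau)+f(y(\tau))$, so that with $s=0$ the NDDE reads $u(t)=-\bigl(c\,y'(t-1)+g(y(t-1))\bigr)$, and $E(t)=\int_{t-1}^t u(\tau)^2\,d\tau$. The idea is to introduce a Lyapunov functional
\begin{equation*}
\mathcal{E}(t)\;=\;E(t)+H(y(t-1)),
\end{equation*}
which by Lemma \ref{LemmaFH} is nonnegative, and to show that $\mathcal{E}$ is non-increasing. Once this is done, the three quantities in (\ref{inE0}) all follow from $\mathcal{E}(0)<+\infty$, by invoking Lemma \ref{LemmaFH} once more to bound $F$ from above by $H$.

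\textbf{The core identity.} Differentiate: $E'(t)=u(t)^2-u(t-1)^2$, and by the chain rule $\frac{d}{dt}H(y(t-1))=2(f-cg)(y(t-1))\,y'(t-1)$ since $H'=2(f-cg)$. Substituting $u(t)=-(cy'(t-1)+g(y(t-1)))$ and using $c^2=1$, expand $u(t)^2$ and $u(t-1)^2$: the two $y'(t-1)^2$ terms cancel, and the cross terms $2y'(t-1)(cg-f)$ produced by the expansion are killed exactly by the term $2y'(t-1)(f-cg)$ coming from $H(y(t-1))'$. What survives is the clean dissipation identity
\begin{equation*}
\mathcal{E}'(t)\;=\;g(y(t-1))^2-f(y(t-1))^2\;\le\;-(1-\cc^{\,2})\,f(y(t-1))^2\;\le\;0,
\end{equation*}
where (\ref{TMHyp3}) has been used. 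Integrating on $(0,t)$ and applying Lemma \ref{LemmaFH} gives (\ref{inE0}) in one stroke, along with $f(y)\in L^2(0,+\infty)$.

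\textbf{From $L^2$-integrability to pointwise decay.} For the asymptotic statement, I would first note that $F$ is coercive: $f'>0$ and $f(0)=0$ force $f(y)$ to keep its sign and to be bounded away from $0$ outside any neighborhood of the origin, so $F(y)\to+\infty$ as $|y|\to+\infty$. Thus $\sup_t F(y(t))<+\infty$ yields $y\in L^\infty$, hence $f(y),g(y)$ are bounded, and writing $y'=u-f(y)$ together with the bound on $E$ shows $\int_{t-1}^t y'(\tau)^2\,d\tau$ is uniformly bounded. Cauchy--Schwarz then makes $y$ uniformly $\tfrac{1}{2}$-Hölder on $(-1,+\infty)$. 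A Barbalat-style argument closes the proof: if $|y(t_n)|\ge\varepsilon$ along some $t_n\to+\infty$, the Hölder estimate forces $|y|\ge\varepsilon/2$ on intervals of fixed length centered at $t_n$; since $f'$ has a positive minimum on the compact range of $y$, $f(y)^2$ stays uniformly away from $0$ on those intervals, contradicting $f(y)\in L^2$.

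\textbf{Main obstacle.} Everything hinges on guessing the correction $H(y(t-1))$. The cancellation in $\mathcal{E}'$ is fine-tuned: it uses $c^{\,2}=1$ (the critical case) and the precise combination $H=2(F-cG)$, and would not occur for a naive choice such as $E$ alone or $E+2F(y(t-1))$. Once the right functional is identified, the rest is routine; the only mildly non-trivial step afterwards is justifying that $f(y)\in L^2$ together with uniform Hölder regularity forces $y(t)\to 0$, which is the role of the Barbalat-type argument above.
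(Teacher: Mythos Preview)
Your proof is correct and follows essentially the same route as the paper. The paper derives the pointwise identity $u(t)^2+(f^2-g^2)(y(t-1))+\frac{d}{dt}H(y(t-1))=u(t-1)^2$ by squaring the equation and expanding $(y'+cg)^2=\bigl((y'+f)-(f-cg)\bigr)^2$, then integrates; this is exactly your statement $\mathcal{E}'(t)=g^2-f^2$ rewritten, and the passage to $y(t)\to 0$ via $L^\infty$ plus uniform $C^{0,1/2}$ bounds is the same in both arguments.
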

The stability is proven by an energy method. Notice that the inequality (\ref{inE0}) allows convergence towards zero with an algebraic rate. Indeed there is not exponential stability of the equilibrium $y=0$, as we will see for instance in the linear case in section \ref{SecLinear}.

\begin{proof}
When $s=0$, the NDDE (\ref{ToyModel}) writes
\begin{equation}
y^{'}(t)+f(y(t))=-c\,\left(y^{'}(t-1)+c\,g(y(t-1))\right),
\end{equation}
where $c=\pm 1$. Taking the square yields
\begin{equation}
\left(y^{'}(t)+f(y(t))\right)^2=\left(y^{'}(t-1)+c\,g(y(t-1))\right)^2.
\label{ProofTh0-1}
\end{equation}
Based on (\ref{FGH}), the right-hand side of (\ref{ProofTh0-1}) at $t-1$ is rewritten 
\begin{equation}
\begin{array}{lll}
\left(y^{'}+c\,g\right)^2 &=& \left(\left(y^{'}+f\right)-\left(f-c\,g\right)\right)^2,\\
[6pt]
&=& \left(y^{'}+f\right)^2+\left(f-c\,g\right)^2-2\,\left(y^{'}+f\right)\,\left(f-c\,g\right),\\
[6pt]
&=& \left(y^{'}+f\right)^2+\left(f-c\,g\right)^2-2\,y^{'}\left(f-c\,g\right)-2\,f\left(f-c\,g\right),\\
[6pt]
&=& \displaystyle \left(y^{'}+f\right)^2+\left(f-c\,g\right)^2-\frac{\textstyle d}{\textstyle dt}\,H(y)-2\,f\left(f-c\,g\right),\\
[6pt]
&=& \displaystyle \left(y^{'}+f\right)^2+\left(f-c\,g\right)\,\left(f-c\,g-2\,f\right)-\frac{\textstyle d}{\textstyle dt}\,H(y),\\
&=& \displaystyle \left(y^{'}+f\right)^2-\left(f^2-g^2\right)-\frac{\textstyle d}{\textstyle dt}\,H(y).
\end{array}
\end{equation}
The latter equation is injected into (\ref{ProofTh0-1}), which yields
\begin{equation}
\begin{array}{l}
\displaystyle
\left(y^{'}(t)+f(y(t))\right)^2+\left(f^2(y(t-1))-g^2(y(t-1))\right)^2\\
[6pt]
\displaystyle
+\frac{\textstyle d}{\textstyle dt}\,H(y(t-1))=\left(y^{'}(t-1)+f(y(t-1))\right)^2.
\end{array}
\label{ProofTh02}
\end{equation}
Integrating (\ref{ProofTh02}) over $[0,\,t]$, we get for all $t>1$
\begin{equation}
\begin{array}{l}
\displaystyle
\int_0^t\left(y^{'}(\tau)+f(y(\tau))\right)^2\,d\tau+\int_{-1}^{t-1}\left(f^2(y(\tau))-g^2(y(\tau))\right)\,d\tau\\
[6pt]
\displaystyle
+H(y(t-1))-H(y(-1))=\int_{-1}^{t-1}\left(y^{'}(\tau)+f(y(\tau))\right)^2\,d\tau.
\end{array}
\label{ProofTh03}
\end{equation}
Equation (\ref{ProofTh03}) is simplified into the energy equality
\begin{equation}
\begin{array}{l}
\displaystyle
\int_{t-1}^t \left(y^{'}(\tau)+f(y(\tau))\right)^2\,d\tau+\int_0^{t-1} \left(f^2(y(\tau))-g^2(y(\tau))\right)\,d\tau+H(y(t-1))\\
[6pt]
\displaystyle =\int_{-1}^0 \left(y^{'}_0(\tau)+f(y_0(\tau))\right)^2\,d\tau+H(y_0(-1))-\int_{-1}^0 \left(f^2(y_0(\tau))-g^2(y_0(\tau))\right)\,d\tau.
\end{array}
\label{ProofTh0-3}
\end{equation}
From lemma \ref{LemmaFH} and equation (\ref{ProofTh0-3}), one deduces the energy inequality for all $\tau>1$
\begin{equation}
\begin{array}{l}
\displaystyle
\int_{t-1}^t \left(y^{'}(\tau)+f(y(\tau))\right)^2\,d\tau+(1-\cc^2)\,\int_0^{t-1} f^2(y(\tau))\,d\tau+2\,(1-\cc)\,F(y(t-1))\\
[10pt]
\displaystyle 
\leq \int_{-1}^0 \left(y^{'}_0(\tau)+f(y_0(\tau))\right)^2\,d\tau+2\,(1+\cc)\,F(y_0(-1))-(1-\cc^2)\int_{-1}^0 f^2(y_0(\tau))\,d\tau\\
[10pt]
\displaystyle
=C_0.
\end{array}
\end{equation}
For all $t>1$, it follows three inequalities:
\begin{equation}
F(y(t))\leq\frac{\textstyle C_0}{\textstyle 2\,(1-\cc)},\quad \int_0^{+\infty}f^2(y(t))\,dt \leq \frac{\textstyle C_0}{\textstyle 1-\cc^2},\quad E(t)\leq C_0.
\label{ProofTh0-4}
\end{equation}
The first inequality in (\ref{ProofTh0-4}) implies $y\in L^{\infty}(0,\,+\infty)$. On any bounded set $K$, there exists $d>0$ such that $d\,|y|\leq|f(y)|\leq|y|\,/\,d$ on $K$. Using the second inequality in (\ref{ProofTh0-4}) and the boundedness of $y$, one obtains $y\in L^2(0,+\infty)$. Lastly, the third inequality ensures that $E$ is bounded; since $f(y)\in L^2(0,\,+\infty)$, one deduces that $y^{'}$ is uniformly bounded in $L^2(T-1,\,T)$, and hence $y$ is bounded in the H\"{o}lder space $C^{0,1/2}(0,\,+\infty)$. It proves that $y$ tends towards 0.
\end{proof}

Four remarks are raised by theorem \ref{ThZero}:
\begin{itemize}
\item if $c=0$ and $\cc=0$, then the NDDE is a simple ODE. Our computation provides  for all $T >0$: 
$2F(y(T)) + \int_0^T (y'(t))^2 + f^2(y(t)) dt = 2F(y(0))$  which implies the global  stability of $y=0$. 
Moreover  the stability of $y= 0$ is exponential since $f'(0) > 0$;
\item in the usual case $|c|<1$, then a stronger energy estimate can be obtained by our proof:
$y\in H^1(0,\,+\infty)$, and the stability is exponential;
\item in the present case $|c|=1$, then $y\in L^2\cap L^{\infty}$ and $y$ is uniformly bounded in $H^1(\tau-1,\,\tau)$ for all $\tau$. As we will see in section \ref{SecLinear} for the linear critical NDDE, the stability is not exponential;
\item if $|c|=1$ and $\cc=1$, then the stability is obtained, but the asymptotic stability can be lost.
\end{itemize}


\subsection{Stability with constant source term}\label{SecNonlinearCte}

In this subsection, we consider a non-homogeneous NDDE (\ref{ToyModel}) with constant source term $D$:
\begin{equation}
\left\{
\begin{array}{l}
\displaystyle
y^{'}(t)+c\,y^{'}(t-1)+f(y(t))+g(y(t-1))=D, \quad t>0,\quad t\notin\mathbb{N},\\
[8pt]
\displaystyle
y(t)=y_0(t)\in H^1(-1,\,0), \quad -1<t<0,
\end{array}
\right.
\label{ToyModel-D}
\end{equation}
with assumptions (\ref{TMhyp}). For the sake of clarity, we analyze successively $g=0$ and $g\neq 0$.

\begin{corollary}[case $g=0$]
Let $y$ be the solution of (\ref{ToyModel-D}). Three cases occur:
\begin{enumerate}
\item if $D=f(d)$, then d is the unique globally attractive solution;\vspace{0.2cm}
\item if $D>\sup f$ (or $D<\inf f$), then $\displaystyle\lim_{t\rightarrow +\infty}y(t)=+\infty$ (or $-\infty$);
\item if $D=\sup f$ (or $D=\inf f$), then there is no convergence towards a constant solution.
\end{enumerate}
\label{CoroD-f}
\end{corollary}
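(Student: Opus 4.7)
The plan is to handle the three cases by three different arguments, all starting from a one-step integration of the equation.

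\textbf{Case 1} (reduction to the homogeneous case). Set $z(t) = y(t) - d$. Since $D = f(d)$, (\ref{ToyModel-D}) with $g \equiv 0$ reduces to
$$
z'(t) + c\,z'(t-1) + \tilde f(z(t)) = 0, \qquad \tilde f(z) := f(z+d) - f(d),
$$
which is a homogeneous NDDE of the form (\ref{ToyModel}) with $\tilde f$ still satisfying (\ref{TMHyp2}), while (\ref{TMHyp3}) is trivially verified for $\tilde g \equiv 0$. Theorem \ref{ThZero} then yields $z(t) \to 0$, i.e.\ $y(t) \to d$. Uniqueness of the equilibrium follows from the strict monotonicity of $f$.

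\textbf{Case 2} (divergence to infinity). Assume $D > \sup f$, the opposite sign being symmetric, and set $\delta := D - \sup f > 0$. The variable $Y(t) := y(t) + c\,y(t-1)$ satisfies $Y'(t) = D - f(y(t)) \geq \delta$. For $c = -1$, this gives $y(t) - y(t-1) \geq Y(0) + \delta t$; telescoping this bound $\lfloor t \rfloor + 1$ times down to the initial interval $[-1,0]$ yields $y(t) \geq \frac{\delta}{2} t^2 + O(t) \to +\infty$. For $c = 1$, the key identity is
$$
y(t) - y(t-2) \;=\; Y(t) - Y(t-1) \;=\; \int_{t-1}^t \big(D - f(y(\tau))\big)\,d\tau \;\geq\; \delta,
$$
which, iterated $\lfloor (t+1)/2 \rfloor$ times, places $t - 2k$ into the compact set $[-1,1]$ where $y$ is bounded below by continuity; hence $y(t) \geq m_0 + \frac{\delta}{2} t + O(1) \to +\infty$.

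\textbf{Case 3} (no convergence to a constant). Take $D = \sup f$; the other case is symmetric. Since $f$ is strictly increasing, $\sup f$ is not attained, so $D \notin f(\mathbb{R})$. Suppose by contradiction that $y(t) \to \ell \in \mathbb{R}$. Then $Y(t) = y(t) + c\,y(t-1) \to (1+c)\ell$ remains bounded, while $Y'(t) = D - f(y(t)) \to D - f(\ell)$ by continuity of $f$; a bounded function whose derivative tends to a non-zero limit must grow linearly, so necessarily $D = f(\ell)$, contradicting $D \notin f(\mathbb{R})$.

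The main obstacle is the $c = 1$ subcase of Case 2: the naive estimate $Y(t) \geq \delta t$ only constrains the sum $y(t) + y(t-1)$ and is \emph{a priori} compatible with wild oscillations of $y$; the differencing trick $Y(t) - Y(t-1) = y(t) - y(t-2) \geq \delta$ turns sum-type monotonicity into pointwise monotonicity over a two-step grid, which is what forces true divergence.
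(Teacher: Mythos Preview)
Your proof is correct and follows the same overall strategy as the paper: reduce Case~1 to Theorem~\ref{ThZero} via the shift $z=y-d$, exploit the lower bound $D-f(y)\geq\delta$ in Case~2, and argue by contradiction in Case~3. The Case~2 arguments are essentially the paper's (the paper iterates the derivative inequality $y'(t)-y'(t-1)\geq\delta$ directly when $c=-1$, while you integrate first and telescope on $y$; for $c=+1$ both routes land on $y(t)-y(t-2)\geq\delta$).

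The one genuine difference is Case~3. The paper passes to the shifted sequence $y_n(s)=y(n+s)$ and takes a distributional limit in the equation to force $f(d)=D$. Your argument is more elementary: $Y(t)=y(t)+c\,y(t-1)$ stays bounded if $y(t)\to\ell$, yet $Y'(t)=D-f(y(t))\to D-f(\ell)$, so boundedness forces $D=f(\ell)$, impossible since the strictly increasing $f$ never attains $\sup f$. This is cleaner for the stated corollary; the paper's route, however, yields the extra conclusion that $y$ cannot converge to any continuous $1$-periodic limit, which your method does not immediately give.
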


\begin{proof}
We consider successively the three cases.

\noindent
\underline{Case 1: $D=f(d)$}. This case occurs if $\displaystyle \lim_{y\rightarrow \pm\infty} f(y)=\pm\infty$. Injecting
$$
z=y-d,\qquad f_d(z)=f(d+z)-f(d)
$$
into (\ref{ToyModel-D}) yields the homogeneous NDDE
$$
z^{'}(t)+c\,z^{'}(t-1)+f^{'}_d(z(t))=0.
$$
Since $f_d(0)=0$ and $f^{'}_d>0$, the assumptions of theorem \ref{ThZero} are satisfied: $\displaystyle\lim_{t\rightarrow +\infty}z(t)=0$, and thus $y$ tends asymptotically towards $d$.\\
\noindent
\underline{Case 2: $D>\sup f$ (or $D<\inf f$)}. To fix the minds, let us assume $\sup f <D<+\infty$, and introduce $\delta=D-\sup f>0$. 
\begin{itemize}
\item If $c=-1$, then $y^{'}(t)-y^{'}(t-1)=D-f(y)\geq \delta$, so that $y^{'}(n+\tau)\geq y^{'}_0(\tau)+n\,\delta$, with $\tau\in[-1,\,0]$. As a consequence, $\displaystyle\lim_{t\rightarrow +\infty}y^{'}(t)=+\infty$, and hence $\displaystyle\lim_{ t\rightarrow +\infty}y(t)=+\infty$.
\item If $c=+1$, then $y^{'}(t)+y^{'}(t-1)\geq \delta$. Integration on $[\tau-1,\,\tau]$ yields $y(\tau)-y(\tau-2)\geq \delta$, and once again $\displaystyle\lim_{t\rightarrow +\infty}y(t)=+\infty$.
\end{itemize}
\noindent
\underline{Case 3: $D=\sup f$ (or $D=\inf f$)}. Let us assume $y(t) \rightarrow d$ when $t \rightarrow + \infty$. Let $y_n(s)$ be $y(n+s)$ for $s \in [0,1]$. We have $y_n(.) \rightarrow d$ in $L^\infty([0,1])$ and $y'_n \rightarrow 0$ in the sens of distribution. The NDDE can be rewritten as follows: $y'_n+c\,y'_{n-1}+f(y_n)=D$. Taking the weak limit, we get $f(d)=D$ which is impossible since $f<D$. Consequently, $y(.)$ cannot converge towards a constant. 

Using the same argument, we can state that $y(.)$ cannot converge towards a periodic continuous solution. More precisely, let us assume that $y_n(s) \rightarrow d(s)$ where $d(.)$ is continuous.   
Writing $y_n(0)=y(n)=y((n-1)+1)=y_{n-1}(1)$ yields $d(0)=d(1)$ i.e. $d$ is a 1-periodic continuous function. Taking the weak limit in the NDDE yields a differential equation for $d(.)$:
$(1+c)\,d'(t)+f(d(t))=D$. 
\begin{itemize}
\item If $c=-1$ then $f(d(t))=D$, which is impossible. 
\item If $c=1$ then $d \in C^1([0,1])$ by the equation. Let $s_0$ be a maximizer of $d(.)$ on the compact set $[0,1]$, then $d'(s_0)=0$ and, by the differential equation, $f(d(s_0))=D$ which is again impossible.
\end{itemize} 
As a consequence, $y(.)$ cannot converge towards a periodic continuous solution.
\end{proof}

Now, let us examine the NDDE (\ref{ToyModel-D}) with all the terms.

\begin{corollary}[case $g\neq 0$]
Under the assumptions (\ref{H1H2}), the unique $d$ such as $f(d)+g(d)=D$ is the unique globally attractive solution of (\ref{ToyModel-D}).
\label{CoroD-fg}
\end{corollary}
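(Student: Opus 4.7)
The plan is to reduce this corollary to Theorem \ref{ThZero} by the translation $z = y - d$, exactly as in Case 1 of Corollary \ref{CoroD-f}, with the new ingredient being that we must verify the shifted pair $(\tilde f,\tilde g)$ still satisfies the structural assumption (\ref{TMHyp3}) with the same $\gamma<1$. This is the point where the stronger hypothesis (\ref{H1}) is essential.

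First I would establish existence and uniqueness of $d$. From (\ref{H1}) we have $g'\geq -\gamma f'$, so $(f+g)'\geq (1-\gamma)f'>0$, hence $f+g$ is strictly increasing. For surjectivity onto $\mathbb{R}$, I split by sign: for $y>0$ one has $f(y)>0$ and $|g(y)|\leq \gamma f(y)$, so
\[
(1-\gamma)\,f(y)\leq f(y)+g(y)\leq (1+\gamma)\,f(y),
\]
and analogously for $y<0$; combined with (\ref{H2}) this gives $f+g\to\pm\infty$ at $\pm\infty$. Therefore a unique $d\in\mathbb{R}$ solves $f(d)+g(d)=D$.

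Next I introduce $z(t)=y(t)-d$, $\tilde f(z)=f(d+z)-f(d)$, $\tilde g(z)=g(d+z)-g(d)$. Since $f(d)+g(d)=D$, substituting into (\ref{ToyModel-D}) cancels the constant term and yields the homogeneous NDDE
\[
z'(t)+c\,z'(t-1)+\tilde f(z(t))+\tilde g(z(t-1))=0.
\]
The functions $\tilde f$ and $\tilde g$ clearly satisfy $\tilde f(0)=\tilde g(0)=0$ and $\tilde f{\,'}=f'(d+\cdot)>0$, so (\ref{TMHyp2}) holds. For the comparison (\ref{TMHyp3}), given $z>0$, by (\ref{H1}),
\[
|\tilde g(z)|\leq \int_0^{z}|g'(d+s)|\,ds\leq \gamma\int_0^{z}f'(d+s)\,ds=\gamma\,\tilde f(z)=\gamma\,|\tilde f(z)|,
\]
and symmetrically for $z<0$. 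This is the step I expect to be the crux of the argument: without (\ref{H1}), the pointwise inequality $|g|\leq\gamma|f|$ does not transfer to the shifted pair, because $f$ and $g$ vanish at different points after translation.

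Finally, the shifted problem falls under Theorem \ref{ThZero}, yielding $z(t)\to 0$, i.e.\ $y(t)\to d$, as $t\to+\infty$. Global attractivity is thus established, and uniqueness of the equilibrium follows from the strict monotonicity of $f+g$ proven in the first step.
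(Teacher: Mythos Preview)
Your proof is correct and follows essentially the same route as the paper: translate by the equilibrium $d$, check that the shifted pair $(\tilde f,\tilde g)$ satisfies the hypotheses of Theorem~\ref{ThZero} (in particular $|\tilde g|\le\gamma|\tilde f|$ via (\ref{H1})), and conclude. You simply supply more detail than the paper on why $d$ exists and is unique and on the integral estimate for $|\tilde g|$, which the paper leaves as one-line remarks.
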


\begin{proof}
Assumptions (\ref{H1H2}) easily imply that there exists a unique $d$ satisfying $f(d)+g(d)=D$. Injecting
$$
z=y-d,\qquad f_d(z)=f(d+z)-f(d),\qquad g_d(z)=g(d+z)-g(d)
$$
into (\ref{ToyModel-D}) yields the homogeneous NDDE
$$
z^{'}(t)+c\,z^{'}(t-1)+f^{'}_d(z(t))+g^{'}_d(z(t-1))=0.
$$
Inequality (\ref{H1}) implies that $\cc\,|f_d|\geq|g_d|$. Since $f_d(0)=0$ and $f^{'}_d>0$, all the assumptions of theorem \ref{ThZero} are satisfied, and hence $z\rightarrow 0$ asymptotically.
\end{proof}

Without assumptions (\ref{H1H2}) in corollary \ref{CoroD-fg}, one may encounter more complex situations, with 2 or more solutions. Note that these assumptions are satisfied in the physically-relevant situation examined in \cite{JuLo12}.


\subsection{Stability with periodic source}\label{SecNonlinearPeriod}

In this section, we consider the non-homogeneous NDDE (\ref{ToyModel}) with the periodic source term $s$, but in the particular case $g=0$. Existence of periodic solutions to (\ref{ToyModel})-(\ref{TMhyp}) is examined in section \ref{SecExistence}. Here, we assume that a periodic solution $P(t)$  exists and  the difference with another solution $y(t)$ is bounded for all time .  We focus on the convergence of $y(.)$ towards $P(.)$.  The convergence is obtained  under the condition (\ref{HypoStab}).

\begin{proposition}[Conditional asymptotic stability of a periodic solution]
Let $y$ be the solution of (\ref{ToyModel}) with $f\in C^2$ and $g=0$, and $P$ be a periodic solution. Let $I=\mbox{conv}\left(y[-1,\,+\infty[,\,P(\mathbb{R}) \right)$ be the convex hull of the two sets. If $I$is bounded and the condition
\begin{equation} 
2\,\sup|P^{'}|<\displaystyle \inf_{w\,\in\,I,\,d\,\in\,P(\mathbb{R})}\frac{\textstyle \left(f(w)-f(d)\right)^2}{\textstyle \left|f(w)-f(d)-(w-d)\,f^{'}(d)\right|}
\label{HypoStab}
\end{equation}
is satisfied, then
$\displaystyle \lim_{t\rightarrow +\infty}(y(t)-P(t))=0$.
\label{PropPeriode}
\end{proposition}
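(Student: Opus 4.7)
My plan is to mimic the energy argument of Theorem~\ref{ThZero} applied to the difference $z(t) = y(t) - P(t)$. Setting $\phi(t) = f(y(t)) - f(P(t))$, the subtraction of the NDDEs satisfied by $y$ and $P$ gives
\[
z'(t) + c\,z'(t-1) + \phi(t) = 0,
\]
so that $z'(t)+\phi(t) = -c\,z'(t-1)$ and, since $c^2 = 1$, squaring yields the pointwise identity $(z'(t)+\phi(t))^2 = (z'(t-1))^2$, exactly as in the proof of Theorem~\ref{ThZero}.

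The new feature is that $\phi$ is not a pure function of $z$, so the primitive $F$ used in Theorem~\ref{ThZero} must be replaced. I would introduce
\[
\Psi(w,d) = 2\int_d^w (f(u)-f(d))\,du, \qquad R(w,d) = f(w) - f(d) - (w-d)\,f'(d),
\]
for which $\Psi \geq 0$ and a short computation of $\partial_w \Psi$ and $\partial_d \Psi$ yields
\[
2\,\phi(\tau)\, z'(\tau) = \frac{d}{d\tau}\Psi\bigl(y(\tau),P(\tau)\bigr) - 2\,P'(\tau)\,R\bigl(y(\tau),P(\tau)\bigr).
\]
Expanding $(z'(t-1))^2 = (z'(t-1)+\phi(t-1))^2 - 2\phi(t-1) z'(t-1) - \phi(t-1)^2$ and integrating on $[0,t]$ then produces an energy identity of the form
\[
\int_{t-1}^t (z'+\phi)^2\, d\tau + \Psi\bigl(y(t-1),P(t-1)\bigr) + \int_0^{t-1}\bigl(\phi^2 - 2\,P'R\bigr)\, d\tau = C_0,
\]
where $C_0$ depends only on initial data and is finite because $I$ is bounded and $f \in C^2$.

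The hypothesis~(\ref{HypoStab}) enters here. On the compact set $I \times P(\mathbb{R})$ the quotient $\phi^2/|R|$ extends continuously to the diagonal $w=d$ — a Taylor expansion gives the limiting value $2(f'(d))^2/|f''(d)|$, which is positive since $f' > 0$ and $f \in C^2$ — so the strict inequality in (\ref{HypoStab}) yields some $\alpha < 1$ with $2\,|P'(\tau)|\,|R(\tau)| \leq \alpha\, \phi(\tau)^2$ for all $\tau \geq 0$. Substituting this into the energy identity gives
\[
\int_{t-1}^t (z'+\phi)^2 d\tau + \Psi\bigl(y(t-1),P(t-1)\bigr) + (1-\alpha)\int_0^{t-1}\phi^2\, d\tau \leq C_0,
\]
hence $\phi \in L^2(0,+\infty)$; since $f' \geq \delta > 0$ on the compact $I$, this upgrades to $z \in L^2(0,+\infty)$. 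The uniform bound on $\int_{t-1}^t (z'+\phi)^2 d\tau$ together with the boundedness of $\phi$ on $I$ gives $z'$ uniformly in $L^2$ on every unit window, so $z$ is uniformly $C^{0,1/2}$, and a uniformly continuous $L^2$ function on $[0,+\infty)$ must tend to $0$.

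The main difficulty is the pointwise comparison $2\,|P'|\,|R| \leq \alpha\,\phi^2$ with $\alpha < 1$: one has to verify that the infimum appearing in~(\ref{HypoStab}) is genuinely attained and strictly positive, which requires desingularizing the $0/0$ form on the diagonal (this is precisely where $f \in C^2$ and $f' > 0$ are used). Once this is secured, the remainder is a direct transcription of the energy mechanics of Theorem~\ref{ThZero}, with the role of $H$ played by $\Psi$ and the ``driving'' term $P'R$ absorbed into the dissipation $\phi^2$.
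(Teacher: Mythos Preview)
Your proof is correct and follows essentially the same route as the paper: the paper writes $\Delta(t,z)=f(z+P(t))-f(P(t))$ and $K(t,z)=\int_0^z\Delta(t,u)\,du$, which are exactly your $\phi$ and $\tfrac12\Psi$ in different variables, derives the same energy identity, and uses (\ref{HypoStab}) in the equivalent form $2|P'|\,|R|\le\tau\,\Delta^2$ with $\tau<1$ to absorb the sign-indefinite term into the dissipation before concluding as in Theorem~\ref{ThZero}. The only differences are cosmetic (you expand the square at $t-1$ rather than at $t$, and you add the helpful remark on desingularizing the quotient on the diagonal, which the paper does not discuss explicitly).
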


We try to extend the proof of Theorem \ref{ThZero} to more complex case, but the extension is very limited. It means that if $P$ is a small $C^1$ periodic solution and $y(.)$ is a small stable perturbation, then $y(.)$ converges towards $P$. In other words, stability and condition (\ref{HypoStab}) imply asymptotic stability. The difficulty follows from the occurrence of small divisors; see section \ref{SecLinear}.

\begin{proof}
Injecting $z(t)=y(t)-P(t)$ in (\ref{ToyModel}) yields
\begin{equation}
z^{'}(t)+c\,z^{'}(t-1)+\Delta(t,\,z(t))=0,
\label{ProofPeriod0}
\end{equation}
with
\begin{equation}
\Delta(t,\,z)=f(z+P(t))-f(P(t))=\int_0^zf^{'}(u+P(t))\,du=z\int_0^1 f^{'}(s\,z+P(t))\,ds.
\label{ProofPeriod1}
\end{equation}
Let us introduce
$$
K(t,\,z)=\int_0^z\Delta(t,\,u)\,du.
$$
$K$ is non-negative and convex with respect to $z$, and it satisfies
\begin{equation}
\begin{array}{lll}
\displaystyle
\frac{\textstyle d}{\textstyle dt}\,K(t,\,z)&=& \displaystyle z^{'}\,\Delta(t,\,z)+P^{'}(t)\int_0^z\left(f^{'}(u+P(t))-f^{'}(P(t))\right)\,du,\\
&=& \displaystyle z^{'}\,\Delta(t,\,z)+P^{'}(t)\left(f(z+P(t))-f(P(t))-z\,f^{'}(P(t))  \right),\\
&=& \displaystyle z^{'}\,\Delta(t,\,z)+P^{'}(t)\,z^2\int_0^1 f^{''}(z\,u+P(t))\,(1-u)\,du,\\
&=& \displaystyle z^{'}\,\Delta(t,\,z)+P^{'}(t)\,z^2\,R(t,\,z).
\end{array}
\label{ProofPeriod2}
\end{equation}
Now, we mimic the proof of theorem \ref{ThZero}. Based on (\ref{ProofPeriod0}) and (\ref{ProofPeriod2}), one deduces
\begin{equation}
z^{'2}(t)+\Delta^2(t,\,z)+2\,\frac{\textstyle d}{\textstyle dt}\,K(t,\,z)-2\,P^{'}(t)\,z^2(t)\,R(t,\,z))=z^{'2}(t-1).
\label{ProofPeriod3}
\end{equation}
Since $P$ is periodic and non constant, the term $P^{'}(t)\,z^2(t)\,R(t,\,z))$ has no fixed sign and there is no hope to control (\ref{ProofPeriod3}). It motivates the condition (\ref{HypoStab}) for small derivative. Based on (\ref{ProofPeriod1}) and (\ref{ProofPeriod2}), this condition is equivalent to say that there exists $0\leq \tau<1$ such that
\begin{equation}
\begin{array}{l}
\displaystyle \hspace{0.5cm} 2\,|P^{'}(t)|\left|f(z+P)-f(P)-z\,f^{'}(P)\right|\leq \tau\,\left(f(z+P)-f(P)\right)^2,\\
\Leftrightarrow \displaystyle 2\,|P^{'}(t)|\left|\int_0^1f^{''}(u\,z+P(t))\,(1-u)\,du\right|\leq \tau\,\left( z \int_0^1 f^{'}(z\,  u+P(t))\,du \right)^2,\\
[15pt]
\Leftrightarrow \displaystyle 2\,P^{'}(t)\,z^2(t)\,R(t,\,z))\leq \tau\,\Delta^2(t,\,z).
\end{array}
\label{ProofPeriod4}
\end{equation}
Integrating (\ref{ProofPeriod3}) over $[0,\,\tau]$ and using the inequality (\ref{ProofPeriod4}) yields
\begin{eqnarray}
\int_{\tau-1}^\tau z^{'2}(t)\,dt+(1-\tau)\int_0^\tau\Delta^2(t,\,z)\,dt+2\,K(\tau,\,z(\tau))
\label{ProofPeriod5}
\\ \leq \int_{-1}^0z^{'2}_0(t)\,dt+2\,K(0,\,z_0(0)).  
\end{eqnarray}
The first term in the left-hand side of (\ref{ProofPeriod5}) implies that $z$ is uniformly continuous, whereas the third term implies that $z$ is bounded. As in the proof of theorem \ref{ThZero}, the second term allows to conclude.
\end{proof}

The stability of the solution to (\ref{ToyModel}) with a non-constant source term  remains an open question. Such a $L^{\infty}$ bound would imply that the interval $I$ in proposition \ref{PropPeriode} is bounded.


\section{Linear NDDE}\label{SecLinear}

In this section, we consider the linear homogeneous NDDE issued from (\ref{ToyModel})
\begin{equation}
y^{'}(t)+c\,y^{'}(t-1)+a\,y(t)+b\,y(t-1)=0.
\label{NDDElinear}
\end{equation}


\subsection{Stability diagram}\label{SecLinearDiagram}

The stability of 0 in (\ref{NDDElinear}) is analyzed in terms of $a=f^{'}$ and $b=g^{'}$. The cases $|c|<1$ and $|c|>1$ are well known in the literature, yielding to asymptotic stability and instability, respectively \cite{Hale03}. We focus therefore on the critical case $|c|=1$ for linear scalar differential-difference equations discussed in \cite{Bellman63}, p. 191. In this book, it is said that the stability is too much intricate, so the model would never be used by engineers !      
 
This analysis has two interests. First, it highlights the optimality of the energy analysis performed in theorem \ref{ThZero}: as in the nonlinear case, where $|g|<f$ and $f^{'}>0$, asymptotic stability is proven iff $|b|<a$. Second, the lost of exponential stability is explicitly exhibited. 

The stability analysis will involve the roots $\lambda$ of the characteristic equation
\begin{equation}
h(\lambda)\equiv h(\lambda;\,a,\,b)=\lambda\,\left(e^\lambda+ c \right)+a\,e^\lambda+b=0
\label{Characteristic}
\end{equation}
obtained by injecting a particular solution $y(t)=e^{\lambda\,t}$ into (\ref{NDDElinear}). 

\begin{proposition}
The characteristic roots of (\ref{Characteristic}) satisfy the following properties:
\begin{enumerate}
\item the roots are located in a vertical strip $\lambda_{\inf}<\Re(\lambda)<\lambda_{\sup}$, where $\lambda_{\inf}$ and $\lambda_{\sup}$ are real constants depending on $(a,b,c)$;
\item the set of roots is countable infinite, yielding a sequence $\lambda_n$;
\item for large values of $|\lambda|$, one gets the asymptotic value $\lambda_n\sim i\,\Omega(n)$, where 
\begin{equation}
\Omega(n)=\left(2\,n+(1+c)\,/\,2\right)\,\pi;
\label{OmegaN}
\end{equation}    
\item the sign of $\Re(\lambda_n)$ is constant on each quarter plans separated by $a=\pm b$;
\item the sign of $\Re(\lambda_n)$ changes when performing a symmetry relative to the line $a+c\,b=0$.
\end{enumerate}
\label{PropCharacteristic}
\end{proposition}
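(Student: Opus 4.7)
The entire plan rests on the identity
\[
e^\lambda(\lambda + a) = -(c\lambda + b),
\]
valid at any root $\lambda \neq -a$, and on the forced asymptotic $e^\lambda \to -c$ it implies when $|\lambda|$ is large.

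For claim 1, writing $|\lambda|\,|e^\lambda + c| = |a\,e^\lambda + b|$ and using $|e^\lambda + c| \geq |1 - e^\sigma|$ for $\sigma = \Re(\lambda)$ yields $|\lambda|\,(1 - e^\sigma) \leq |a|\,e^\sigma + |b|$ when $\sigma < 0$, and a symmetric bound when $\sigma > 0$. Thus any root with $|\sigma|$ larger than some threshold has $|\lambda|$ uniformly bounded, which forces $|\sigma|$ itself to lie in a bounded interval. For claim 3, the identity $e^{\lambda_n} = -(c\lambda_n + b)/(\lambda_n + a) \to -c$ along any sequence with $|\lambda_n| \to \infty$ gives $\Re(\lambda_n) \to 0$ and $\Im(\lambda_n) \equiv \arg(-c) \pmod{2\pi}$, which is $0$ for $c = -1$ and $\pi$ for $c = 1$; both cases collapse into $\Im(\lambda_n) \sim \Omega(n) = (2n + (1+c)/2)\pi$. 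Claim 2 follows: $h$ is entire and nonzero so its zeros form a discrete set, and the asymptotic analysis supplies infinitely many, made rigorous by a Rouch\'e argument comparing $h(\lambda)$ with $(\lambda + a)(e^\lambda + c)$ on small disks around each $i\Omega(n)$, where $e^\lambda + c$ has simple, uniformly spaced zeros.

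For claim 4, the roots depend analytically on $(a,b)$, so $\Re(\lambda_n)$ can change sign only along the locus where $\lambda_n$ becomes purely imaginary. Setting $\lambda = i\tau$ in $h=0$ and splitting into real and imaginary parts,
\[
a\cos\tau + b - \tau\sin\tau = 0, \qquad a\sin\tau + \tau(\cos\tau + c) = 0.
\]
When $\sin\tau \neq 0$, eliminating $a$ and using $c^2 = 1$ collapses $b/a = -(1 + c\cos\tau)/(\cos\tau + c)$ to $-c$, i.e.\ $b = -c\,a$. When $\sin\tau = 0$ the remaining possibilities are $\tau = 0$ with $b = -a$, or $\tau = k\pi$ with $(-1)^k = -c$ and $b = c\,a$. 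In every scenario the locus is contained in $\{a = b\} \cup \{a = -b\}$, which cuts the parameter plane into the four quadrants claimed, on each of which every $\Re(\lambda_n)$ keeps a fixed sign.

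For claim 5, the orthogonal reflection across $a + cb = 0$ is, since $c^2 = 1$, the involution $(a,b) \mapsto (-cb,\,-ca)$. A direct algebraic manipulation then establishes the functional identity
\[
h(\lambda;\,a,b) \;=\; -c\,e^\lambda\,h(-\lambda;\,-cb,\,-ca),
\]
so the roots for the reflected parameters are exactly the negatives of the original ones, and every real part is therefore reversed. The main obstacle will be the Rouch\'e step underlying claims 2--3: one must confirm that for each large $n$ there is \emph{exactly one} zero of $h$ inside the small disk around $i\Omega(n)$, and that no additional zeros hide in the gaps between such disks within the strip of claim 1. This requires a uniform lower bound on $|e^\lambda + c|$ off these disks, combined with the observation that the leftover perturbation $b - ca$ is of fixed size whereas $(\lambda + a)(e^\lambda + c)$ has uniformly separated simple zeros.
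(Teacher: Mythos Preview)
Your proposal is correct and follows essentially the same route as the paper: the asymptotic $e^{\lambda}\to -c$ for property~3, the real/imaginary splitting of $h(i\tau)=0$ together with the identity $1+c\cos\tau=c(\cos\tau+c)$ to force the locus onto $a=\pm b$ for property~4, and the functional relation $h(\lambda;a,b)=-c\,e^{\lambda}h(-\lambda;-cb,-ca)$ for property~5 are exactly the paper's arguments. The only difference is that the paper treats properties~1 and~2 as classical and cites them, whereas you sketch explicit bounds and a Rouch\'e localization; your honest flagging of the Rouch\'e step as the remaining technical point is appropriate, and in fact goes beyond what the paper itself supplies.
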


\begin{proof} 
We prove successively the properties.\\
\underline{Properties 1 and 2}. These are classical results \cite{Hale03,JuLo12}, not repeated here. 

\noindent
\underline{Property 3}. If $\lambda$ is a root of (\ref{Characteristic}) and $|\lambda|\rightarrow+\infty$, then
\begin{equation}
\displaystyle e^\lambda=-\frac{\textstyle c\,\lambda+b}{\textstyle \lambda+a}\sim-c,
\label{ExpOmegaN}
\end{equation}
and hence $\lambda\equiv \lambda_n\sim i\,\Omega(n)$ in (\ref{OmegaN}).

\noindent
\underline{Property 4}. Let us consider a imaginary characteristic root $\lambda=i\,w$. Injecting this root into (\ref{Characteristic}) leads to
\begin{subnumcases}{\label{ProofStabAB}}
\displaystyle
b+a\,\cos w - w\,\sin w=0,\label{ProofStabAB1}\\
[8pt]
\displaystyle
a\,\sin w+w\,\left(c+\cos w\right)=0.\label{ProofStabAB2}
\end{subnumcases}
If $\sin w=0$, then $w=n\,\pi$ and the following alternative occurs: if $n=0$ then $a+b=0$, else $a-c\,b=0$. If $\sin w\neq 0$, then one injects $a\,\sin w=-w\,(c+\cos w)$ into (\ref{ProofStabAB}), which yields
\begin{equation}
\begin{array}{lll}
\displaystyle
b\,\sin w&=&\displaystyle-a\,\sin w \,\cos w+w\,\sin^2 w=c\,w\,\cos w+ w\,\cos^2 w+w\,\sin^2 w,\\
[8pt]
&=&
\displaystyle w\,\left(1+c\,\cos w\right),\\
[8pt]
&=& -a\,c\,\sin w,
\displaystyle
\end{array}
\end{equation}
and hence $a+c\,b=0$. By Rouch\'e's theorem, the roots of (\ref{Characteristic}) depend continuously on $a$ and $b$. Consequently, the sign of $\Re(\lambda_n)$ is constant on each quarter plans separated by the lines $a=\pm b$. 

\noindent
\underline{Property 5}. From the characteristic equation (\ref{Characteristic}), one deduces
\begin{equation}
\begin{array}{lll}
h(\lambda;\,a,\,b) &=& e^{\lambda}\,\left(c\,\lambda\,\left(e^{-\lambda}+c\right)+c^2\left(b\,e^{-\lambda}+a\right)\right), \\
[8pt]
&=& -c\,e^{\lambda}\,\left(-\lambda\,\left(e^{-\lambda}+c\right)-c\,b\,e^{-\lambda}-c\,a\right),\\
[8pt]
&=& -c\,e^{\lambda}\,h(-\lambda;\,-c\,b,\,-c\,a),
\end{array}
\end{equation}
which concludes the proof.
\end{proof}


For the sake of clarity, we will distinguish the cases $c=+1$ and $c=-1$ in (\ref{NDDElinear}). The case $c=+1$ is examined in detail in the next theorem, whereas the modifications induced by $c=-1$ are given further with a short proof. 

\begin{theorem}[Stability diagram for $c=+1$]
We determine the stability of $y=0$ in (\ref{NDDElinear}) in terms of $(a,b)$. Only one open quadrant is asymptotically stable, the other three being unstable:
\begin{itemize}
\item $a>|b|$: asymptotic stability ($\forall n,\,\Re(\lambda_n)<0$);
\item $b<-|a|$: instability ($\forall n,\, \Re(\lambda_n)>0$);
\item $b>|a|$: instability ($\Re(\lambda_n)>0$, except one root $\Re(\lambda)<0$);
\item $a<-|b|$: instability ($\Re(\lambda_n)<0$, except one root $\Re(\lambda)>0$).
\end{itemize}
On the four edges, one has:
\begin{itemize}
\item $a=|b|$: stability ($\Re(\lambda_n)=0$, except one root $\lambda=-a<0$ if $b>0$);
\item $ -a= b>0$: stability (the spectrum belongs to $i \mathbb{R}$,  $ \forall n, \; \Re(\lambda_n)=0$);
\item $a=b<0$: instability: $\Re(\lambda_n)=0$, except one positive root $\lambda=a >0$.
\end{itemize}
Finally  $a=0,\; b=0$ is a stable case.
\label{ThStabDiagP1}
\end{theorem}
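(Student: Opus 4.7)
The strategy is to leverage Proposition \ref{PropCharacteristic}: the signs of $\Re(\lambda_n)$ are locally constant on each of the four open wedges delimited by $a = \pm b$ (Property 4), and the involution $(a,b) \mapsto (-b,-a)$ reverses real parts (Property 5 with $c = +1$). So I would pin down the sign pattern on two adjacent wedges directly and propagate to the other two by symmetry.

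I would first treat the stable wedge $\{a > |b|\}$ by a modulus estimate. Rewriting $h(\lambda) = 0$ as $e^\lambda(\lambda + a) = -(\lambda + b)$ and taking the squared modulus with $\lambda = \mu + i\nu$ gives
\[
e^{2\mu}\bigl[(\mu + a)^2 + \nu^2\bigr] = (\mu + b)^2 + \nu^2.
\]
If $\mu \geq 0$, then $e^{2\mu} \geq 1$ forces $|\mu + a| \leq |\mu + b|$; but $a > |b|$ and $\mu \geq 0$ give $\mu + a > \mu + |b| \geq |\mu + b|$, a contradiction. Hence $\Re(\lambda_n) < 0$ for every $n$, and Property 5 transfers this to $\{b < -|a|\}$ with reversed sign.

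For $\{b > |a|\}$ I would perturb off the boundary $a = b > 0$, on which $h$ factors as $(\lambda + a)(e^\lambda + 1)$ and the spectrum reduces to $\{-a\} \cup \{i(2k+1)\pi : k \in \mathbb{Z}\}$. Setting $b = a + \varepsilon$ with $\varepsilon > 0$ pushes into the wedge, and a first-order expansion near $\lambda = i\omega_k$, $\omega_k = (2k+1)\pi$, uses $e^\lambda + 1 \approx -\delta_k$ to yield
\[
\delta_k = \frac{\varepsilon}{a + i\omega_k}, \qquad \Re(\delta_k) = \frac{a\,\varepsilon}{a^2 + \omega_k^2} > 0,
\]
so every imaginary root enters $\{\Re > 0\}$. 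A similar expansion near $\lambda = -a$ gives a perturbation $\eta = -\varepsilon/(e^{-a} + 1) < 0$, so that root stays negative real. By the constancy of signs in the open wedge (Property 4), this pattern extends throughout $\{b > |a|\}$: exactly one root with $\Re < 0$, all others with $\Re > 0$. I would also verify the exceptional real root directly by studying $\phi(\lambda) = (\lambda + a)e^\lambda + (\lambda + b)$ on $\mathbb{R}$: one has $\phi(0) = a + b > 0$, $\phi(\lambda) \to -\infty$ as $\lambda \to -\infty$, and $\phi'(\lambda) = 1 + (\lambda + a + 1)e^\lambda$ is monotonicity-controlled enough to exclude accidental extra real zeros. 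The wedge $\{a < -|b|\}$ then follows by Property 5.

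The edge cases reduce to explicit factorizations. On $a = b$, $h(\lambda) = (\lambda + a)(e^\lambda + 1)$ has spectrum $\{-a\} \cup \{i(2k+1)\pi\}$: stable with extra negative root $-a$ if $a > 0$, unstable with single positive root $-a$ if $a < 0$. On $a = -b$, $h(\lambda) = \lambda(e^\lambda + 1) + a(e^\lambda - 1)$ admits $\lambda = 0$ as a root; the remaining imaginary roots $i\nu$ satisfy $\nu \cot(\nu/2) = -a$, whose real solutions fill a discrete sequence precisely when $b = -a > 0$. The corner $a = b = 0$ collapses to $\lambda(e^\lambda + 1) = 0$, with spectrum on $i\mathbb{R}$. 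I expect the main obstacle to be the \emph{global} uniqueness of the exceptional real root in $\{b > |a|\}$: the perturbative argument only controls it in a neighborhood of the boundary $a = b$, so ruling out additional real zeros of $\phi$ far from the boundary will require a careful monotonicity analysis of $\phi$ on $\mathbb{R}$ (or an argument-principle count on a suitable contour).
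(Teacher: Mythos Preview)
Your overall architecture matches the paper's: pin down two adjacent open wedges and propagate by Properties~4 and~5 of Proposition~\ref{PropCharacteristic}. The two substantive differences are both in your favor. For $a>|b|$ you use the direct modulus identity $e^{2\mu}\bigl[(\mu+a)^2+\nu^2\bigr]=(\mu+b)^2+\nu^2$, which is shorter and more transparent than the paper's route via the complex energy equality~(\ref{nrjC}). For $b>|a|$ you perturb off the factored edge $a=b>0$, where the spectrum is explicitly $\{-a\}\cup\{i(2k+1)\pi\}$ and a \emph{first}-order expansion suffices; the paper instead perturbs from the corner $(0,0)$ along $a=0$, where the leading correction to each $\lambda_m$ is purely imaginary and a \emph{second}-order computation is needed to see $\Re(\lambda_m)>0$. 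Your choice of base point avoids that extra order.

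Your self-identified worry about the ``global uniqueness of the exceptional real root'' is misplaced. Property~4 (no root can touch $i\mathbb{R}$ inside an open wedge) together with Rouch\'e continuity already gives that the count of roots with $\Re<0$ is locally constant, hence globally constant on the connected wedge $\{b>|a|\}$; no separate monotonicity analysis of $\phi$ is required.

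Where your proposal is genuinely thin is the edge $-a=b>0$. Exhibiting $\lambda=0$ and writing down the transcendental equation $\nu\cot(\nu/2)=-a$ for the imaginary roots does not show that \emph{all} roots lie on $i\mathbb{R}$; that equation has infinitely many real solutions for every value of $a$, so ``precisely when $b=-a>0$'' is not the right discriminant. The paper handles this edge differently: Property~5 gives that $\lambda$ is a root iff $-\lambda$ is, and then a continuity/connectivity argument tracks the single negative-real-part root of the wedge $\{b>|a|\}$ and the single positive-real-part root of $\{a<-|b|\}$ as they coalesce at $\lambda=0$ on the edge, forcing all remaining roots to sit on $i\mathbb{R}$. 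You should also cover the half-edge $a=-b>0$ (part of $a=|b|$), which you do not explicitly treat; the paper gets it from the energy equality~(\ref{nrjCbord}).
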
 

\begin{proof}
Based on property 4 of Proposition \ref{PropCharacteristic}, we analyze successively the signs of $\Re(\lambda)$ on each quarter plan.

\noindent
\underline{Quadrant 1: $a\geq|b|$}. The proof of theorem \ref{ThZero} is adapted to the case of complex solutions $y(t)=e^{\lambda\,t}$, $\lambda\in\mathbb{C}$. Instead of (\ref{ProofTh0-3}), one obtains
\begin{equation} \label{nrjC}
\begin{array}{l}
\displaystyle
\int_{t-1}^t\left|y^{'}(\tau)+a\,y(\tau)\right|^2\,d\tau+(a-b)\,\left|y(t-1)\right|^2+(a^2-b^2)\int_0^{t-1}|y(\tau)|^2\,d\tau\\
\displaystyle
=\int_{-1}^0\left|y^{'}_0(\tau)+a\,y_0(\tau)\right|^2\,d\tau+\int_{-1}^0|y_0(\tau)|^2\,d\tau+(a-b)\,|y_0(-1)|^2.
\end{array}
\label{ProofThStab}
\end{equation}
This equality yields stability results when $a-b\geq0$ and $a^2-b^2\geq 0$, ie $a\geq|b|$. If $a>|b|$, then asymptotic stability follows directly from (\ref{ProofThStab}), as in the proof of theorem \ref{ThZero}. If $a=|b|$, the energy equality (\ref{nrjC}) becomes
\begin{equation} \label{nrjCbord}
\int_{t-1}^t\left|y^{'}(\tau)+|b|\,y(\tau)\right|^2\,d\tau + (|b|-b)\,\left|y(t-1)\right|^2=C_0. 
\end{equation}
Three cases are distinguished:
\begin{enumerate}
\item $a=b=0$.  The NDDE becomes $y'(t)+y'(t-1)=0$ and $h(s)=0$ iff $s(e^\lambda+1)=0$. The solutions of the characteristic equation (\ref{Characteristic}) are also known explicitly: $\lambda=0$  and $\lambda=i\,(2\,k+1)\,\pi$, $k\in\mathbb{Z}$ with multiplicity 1, which proves the stability. Notice that the energy equality (\ref{nrjCbord}) only means that all $\lambda_n \in  i \mathbb{R}$.
\item $a=b>0$. The NDDE becomes $z'(t)+a z(t) =0$ with $z(t)=y(t)+y(t-1)$. One therefore obtains $z(t)=C e^{-a t}$ and $y(t)=D^{-a t}+ w(t) $, where $w$ is a solution of the homogeneous difference equation $w(t)+w(t-1)=0$. The spectrum is given by $\lambda=-a$ and$\lambda=i\,(2\,k+1)\,\pi$, $k\in\mathbb{Z}$. All the roots have multiplicity 1, which proves the stability.
\item $0<a=-b$, energy yields 
$$ 
\int_{t-1}^t\left|y^{'}(\tau))\right|^2+a^2\,\left|y(\tau)\right|^2\,d\tau + a\,\left|y(t)\right|^2 + a\,\left|y(t-1)\right|^2=C_0, 
$$
which allows to conclude since $y(t)$ is bounded. Furthermore, this equality also means that each characteristic root is simple and is an imaginary number.
\end{enumerate} 


\noindent
\underline{Quadrant 2: $b\leq -|a|$}. Based on property 5 of proposition \ref{PropCharacteristic}, the inner quadrant and its two edges are successively analyzed:
\begin{enumerate}
\item $b<-|a|$. From the case 1 of quadrant $a>|b|$, one deduces that $\Re(\lambda)>0$ when $b<-|a|$.
\item $b=a<0$. This line is obtained by symmetry of $b=a>0$. From the case 3 of quadrant $a>|b|$, it follows that all the roots are imaginary, except one root $\lambda=-a>0$: this edge is therefore unstable.
\item $b=-a<0$. Symmetry of roots relative to $a+b=0$ yields $\Re(\lambda)=0$: this edge is therefore stable.
\end{enumerate}


\noindent
\underline{Quadrant 3: $b\geq |a|$}. When $a=b=0$, the spectrum of (\ref{Characteristic}) is known: $\lambda=0$ and $\lambda=\lambda_m=i\,m=i\,(2\,k+1)\,\pi$. In the case $\lambda=0$, the implicit function theorem yields $\lambda^{'}(b)=-1/2$ and hence $\lambda(b)=-b/2+{\cal O}(b^2)$. Consequently, one obtains $\Re(\lambda)<0$ since $b>0$.

In the cases $\lambda=\lambda_m$, the implicit function theorem yields $\lambda_m^{'}(b)=-i\,m$ and hence 
\begin{equation}
\lambda_m(b)=i\,m-\frac{\textstyle i}{\textstyle m}\,b+A\,b^2+{\cal O}(b^3).
\label{ProofThStabTFI}
\end{equation}
To determine $A$, we inject (\ref{ProofThStabTFI}) into (\ref{Characteristic}):
\begin{equation}
h(\lambda_m(b),\,0,\,b)=\left(i\,m\left(\frac{\textstyle 1}{\textstyle 2\,m^2}-A\right)+\frac{\textstyle 1}{\textstyle m^2}\right)\,b^2+{\cal O}(b^3)=0,
\end{equation}
which leads to $A=1/(2\,m^2)-i/m^3$. Injecting $A$ into (\ref{ProofThStabTFI}) yields
\begin{equation}
\lambda_m(b)=i\,m-\frac{\textstyle i}{\textstyle m}\,b+\left(\frac{\textstyle 1}{\textstyle 2\,m^2}-\frac{\textstyle i}{\textstyle m^3}\right)\,b^2+{\cal O}(b^3),
\end{equation}
which therefore proves $\Re(\lambda_m)>0$.

This analysis of spectrum is valid only on a neighborhood of $(a=0,\,b=0)$ with $b>0$. The property 4 of proposition \ref{PropCharacteristic} allows to extend the result to the whole quadrant.


\noindent
\underline{Quadrant 4: $a\leq -|b|$}. This case is obtained by performing a symmetry of roots on quadrant 3 relative to the line $a+b=0$. It follows that there exists a single root with $\Re(\lambda)>0$ and an infinite sequence of roots such that $\Re(\lambda_m)<0$.

\noindent
\underline{Edge $b=-a>0$}. Symmetry relative to $a+b=0$ implies that if $\lambda$ is a root of (\ref{Characteristic}), then $-\lambda$ is also a root. Moreover, 0 is a simple root if $b\neq 2$; otherwise, $h^{'}(0,\,-b,\,b)=2-b=0$ and 0 is of multiplicity 2.

Now, let us consider the points $M(a,\,b)$ with $-b<a<0$ and $O$ the origin. The segment $OM$ is outside the half-line $(-b,\,b)$ and belongs to the quadrant $b>|a|$. At $M$, $h(0,\,a,\,b)=b-|a|>0$ and $\displaystyle \lim_{x\rightarrow -\infty}h(x,\,a,\,b)=-\infty$: there exists a negative real root $\lambda_0(a,\,b)$ at $M$. Since there is only one root with negative real part on quadrant $b>|a|$ (see quadrant 3), $\lambda_0$ is unique. By symmetry, there exists also a unique positive real root in the quadrant $a<-|b|$. Consequently, the only root with negative real part on quadrant $b>|a|$ connects the only root with positive real part on quadrant $a<-|b|$ at $\lambda_0(-b,\,b)=0$. 

On the other hand, the other roots on quadrants $b>|a|$ and $a<-|b|$ satisfy $\Re(\lambda_n)>0$ and $\Re(\lambda_n)<0$, respectively. By Rouch\'e's theorem, they depend continuously on $(a,\,b)$. Consequently, $\Re(\lambda_n)=0$ on $(-b,\,b)$.
\end{proof}

Four remarks are raised by theorem \ref{ThStabDiagP1}:
\begin{enumerate}
\item The asymptotically stable region $a>|b|$ in the plane of parameters $(a,b)$ is exactly the region given by theorem \ref{ThZero}, i.e. the energy method is optimal for the linear case to get the asymptotically stable region;
\item $a=|b|$: the stability follows from the same arguments as in the case $a>|b|$: energy method: theorem \ref{ThZero}, and a study of the characteristic roots as in \cite{JuLo12}. Moreover, crossing this edge, all real parts become positive in $b<-|a|$. This kind of dramatic transition, where all real parts are negative on $a>|b|$ becomes positive on  $b<-|a|$, is an {\it essential instability} \cite{Hale03,Erneux09,KE};
\item $-a=b>0$: this is a stable case if and only if all roots are simple. But it is not always the case. For instance, the root $s=0$ is simple on this edge except $(a=-2,\,b=2)$.  In this last case, $s=0$ is of multiplicity 2 and the unbounded solution $y(t)=t$ occurs;
\item the central case $a=b=0$, at the boundary of all previous subdomains, is stable.
\end{enumerate}


We treat briefly in a similar way the case $c=-1$. Stability of $y=0$ in (\ref{NDDElinear}) is studied in terms of $(a,b)$.

\begin{theorem}[Stability diagram for $c=-1$]
$\mbox{}$\\
\noindent
Only one open quadrant is asymptotically stable, the other are unstable:
\begin{itemize}
\item $a>|b|$: asymptotic stability ($\forall n,\,\Re(\lambda_n)<0$);
\item $b >|a|$: instability ($\forall n,\, \Re(\lambda_n)>0$);
\item $b<-|a|$: instability ($\Re(\lambda_n)>0$, except one root $\Re(\lambda)<0$);
\item $-a >|b|$: instability ($\Re(\lambda_n)<0$, except one root $\Re(\lambda)>0$).
\end{itemize}
On the four edges, one has: 
\begin{itemize}
\item $a=|b|$: stability ($\Re(\lambda_n)=0$, except one root $\lambda=-a<0$ if $b<0$);
\item $a= b>0$: stability ($\Re(\lambda_n)=0$);
\item $a= -b <0$: instability ($\Re(\lambda_n)=0$, except one positive root $\lambda=-a $).
\end{itemize}
At the origin, one has
\begin{itemize}
\item $a=b=0$: weak instability ($\Re(\lambda_n)=0$, except 0 with multiplicity 2).
\end{itemize}
\label{ThStabDiagM1}
\end{theorem}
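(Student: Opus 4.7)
The plan is to mirror the proof of Theorem~\ref{ThStabDiagP1}, reusing Proposition~\ref{PropCharacteristic} and noting two specific features of $c=-1$: the symmetry line of property~5 becomes $a+cb=a-b=0$, and the origin of parameter space corresponds to $h(\lambda;0,0)=\lambda(e^\lambda-1)$, so $\lambda=0$ is now a double root while the remaining roots form the simple sequence $\{2k\pi i : k\in\mathbb{Z}\setminus\{0\}\}$.

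First, for the asymptotically stable quadrant $a>|b|$, I would reuse directly the energy method of Theorem~\ref{ThZero}, which applies identically for $c=\pm 1$. Specialized to the linear equation it yields
\[
\int_{t-1}^t |y'+ay|^2\,d\tau+(a^2-b^2)\int_0^{t-1}|y|^2\,d\tau+(a+b)\,|y(t-1)|^2=C_0,
\]
whose three coefficients are strictly positive throughout the quadrant; asymptotic stability follows as in the proof of Theorem~\ref{ThZero}. The adjacent quadrant $b>|a|$ is then obtained at once from property~5, which for $c=-1$ is the involution $(a,b)\mapsto(b,a)$, $\lambda\mapsto-\lambda$: it maps $b>|a|$ onto $a>|b|$, reversing the signs of all $\Re(\lambda_n)$.

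The two remaining quadrants $b<-|a|$ and $a<-|b|$ are exchanged by the same involution, so it is enough to treat one locally and then propagate. Along the half-line $a=0$, $b<0$, the equation $\lambda(e^\lambda-1)+b=0$ makes the double root split via $\lambda^2\simeq -b$ into two real roots of opposite sign, of order $\sqrt{|b|}$. The roots $\lambda_k(b)$ near the simple eigenvalues $2k\pi i$ are handled exactly as in~(\ref{ProofThStabTFI}) by the implicit function theorem; a second-order expansion gives $\Re(\lambda_k(b))>0$. Property~4 then transports the exact count --- all roots in the right half-plane except one negative real root --- to the whole quadrant, and the involution produces the mirror statement for $a<-|b|$.

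For the edges I would combine the energy equality, the involution of property~5, and the explicit identification of real roots. On $a=b>0$ the involution fixes the point, so roots occur in $\pm$-pairs, and the energy equality (with $a+b>0$ and $a^2-b^2=0$) yields boundedness; together these force the spectrum onto $i\mathbb{R}$. On $a=-b>0$ a direct substitution gives $h(-a;a,-a)=0$, so $\lambda=-a<0$ is a real root, while the energy equality (with $a+b=0$) implies boundedness of the remaining, imaginary, spectrum; the symmetric edge $a=-b<0$ is its image under the involution and carries the single positive real root $\lambda=-a$. At the origin the factorisation $h(\lambda;0,0)=\lambda(e^\lambda-1)$ exhibits the double zero, responsible for the unbounded solution $y(t)=t$, hence the weak instability. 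The step I expect to be the main obstacle is the perturbation analysis of the double root at $\lambda=0$: unlike the $c=+1$ case one has to control a genuinely second-order splitting, and then verify via property~4 that the local picture (one negative real root, infinitely many with $\Re>0$) persists throughout the whole quadrant $b<-|a|$.
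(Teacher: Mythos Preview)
Your plan is correct and follows the same overall architecture as the paper (energy equality for $a>|b|$, property~5 for the mirror quadrant, property~4 to propagate local sign information), but it diverges from the paper on the key step: how to access the quadrant $b<-|a|$ and the edge $a+b=0$.

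The paper does \emph{not} perturb the double root at the origin. Instead it observes that on the whole line $a=-b$ the change of unknown $z(t)=y(t)-y(t-1)$ collapses the NDDE to the scalar ODE $z'+az=0$. This immediately gives the spectrum on that line as $\{-a\}\cup\{2k\pi i:k\in\mathbb{Z}\}$ (with the root $0$ becoming double exactly when $a=0$), settling both edges $a=-b>0$ and $a=-b<0$ at once. The quadrant $b<-|a|$ is then reached by ``passing through'' a point such as $(1,-1)$ on this edge and invoking continuity together with property~4; the symmetric quadrant follows by the involution. Your route---splitting the double root along $a=0$, $b<0$ via $\lambda^2\simeq -b$ and doing a second-order implicit-function expansion for the roots near $2k\pi i$---is more computational but entirely valid, and in fact makes the sign of $\Re(\lambda_k)$ inside the quadrant explicit rather than inferred from an edge. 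What you lose is the clean structural explanation of the edge $a+b=0$; your energy argument there (with $a+b=0$ and $a^2-b^2=0$) leaves only $\int_{t-1}^t|y'+ay|^2\,d\tau=C_0$, which for exponential solutions forces $\Re(\lambda)=0$ or $\lambda=-a$, so it does work, but the paper's substitution is shorter and simultaneously identifies the exceptional root without a separate check of $h(-a;a,-a)=0$.
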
 

\begin{proof}
The proofs of the various cases are shortly sketched:
\begin{enumerate}
\item We begin by the energy equality for $c=-1$: 
\begin{equation} \label{nrjCm}
\begin{array}{l}
\displaystyle
\int_{t-1}^t\left|y^{'}(\tau)+a\,y(\tau)\right|^2\,d\tau+(a+b)\,\left|y(t-1)\right|^2+(a^2-b^2)\int_0^{t-1}|y(\tau)|^2\,d\tau\\
\displaystyle
=\int_{-1}^0\left|y^{'}_0(\tau)+a\,y_0(\tau)\right|^2\,d\tau+\int_{-1}^0|y_0(\tau)|^2\,d\tau+(a+b)\,|y_0(-1)|^2.
\end{array}. 
\end{equation}
This energy equality yields the asymptotic stability when $a>|b|$. 
\item When $a=-b$, the NDDE becomes $z'(t)+a\,z(t)=0$ with $z(t)=y(t)-y(t-1)$. The spectrum is therefore $-a$ and on the imaginary axis. When $a=0$, 0 is a root with multiplicity 2; when $a \neq 0$, the roots are simple.
\item The stability on $a=b>0$ is given by the energy equality.
\item The symmetry induced by property 5 of proposition \ref{PropCharacteristic} concludes the case $ b> |a|$ and $b=-a>0$.
\item Passing trough $a=1, b=-1$, we obtain the sign of the real part of roots in $ b < -|a|$ for instance, and by symmetry in $-a>|b|$.
\item On $a=b<0$, the real part of roots are null, as in the proof of theorem \ref{ThStabDiagM1}.
\end{enumerate}
\end{proof}


\subsection{Small divisors}\label{SecSD}

Now, we consider the linearized non-homogeneous NDDE issued from (\ref{ToyModel})
\begin{equation}
{\cal L}\,y = y^{'}(t)+c\,y^{'}(t-1)+a\,y(t)+b\,y(t-1)=s(t),
\label{NDDElinearS}
\end{equation}
where $s$ is a $T$-periodic function or an almost periodic function \cite{Corduneanu89}, and $c=\pm 1$. We assume $a>|b|$: as stated in theorems \ref{ThStabDiagP1} and \ref{ThStabDiagM1}, this is the only asymptotically stable case. Based on (\ref{Characteristic}), we introduce
${\cal H}=e^{- \lambda}\,h(\lambda)$ which satisfies
\begin{equation}
\begin{array}{l}
\displaystyle
{\cal L}(\exp(\lambda t))={\cal H}(\lambda)\,\exp(\lambda t),\\
[6pt]
\dis
{\cal H}(\lambda)=\lambda\,(1+c \exp(-\lambda))+a+b\,\exp(-\lambda)),
\label{H}
\end{array}
\end{equation}
and $|{\cal H}(i\,\omega)|=|h(i\,\omega)|$. Lastly, we denote by $S$ and $Y$ the mean values of $s$ and $y$ over one period: for instance, $Y(t)=\frac{1}{T}\int_{t-T}^t y(\tau)\,d\tau$.

If $S=0$, then elementary calculations yield ${\cal L}\,Y=0$. In this case, theorem \ref{ThZero} ensures that $\dis \lim_{t \rightarrow + \infty}Y(t)=0$. The mean value of the periodic solution - if it exists - is therefore asymptotically stable.

If the source is monochromatic $s(t)\equiv e^{i\,\omega\,t}$, then $y(t)= y_p(t)+z(t)$, where the unique periodic solution is $y_p(t)=e^{i\,\omega\,t}\,/\,{\cal H}(i\,\omega)$, and $z$ satisfies the homogeneous equation (\ref{NDDElinear}): ${\cal L}\,z =0$. Theorem \ref{ThZero} ensures that $\displaystyle \lim_{t\rightarrow +\infty} z(t)=0$. In other words, a monochromatic source provides a unique periodic solution which is globally asymptotically stable. 

Now, we consider a source with an infinite spectrum:
\begin{equation}
s(t)=\sum_{k\in \mathbb{Z}} s_k\,e^{i\,\omega_k\,t}.
\label{SFourier}
\end{equation}
A formal solution of (\ref{NDDElinearS}) is
\begin{equation}
y_p(t)=\sum_{k\in \mathbb{Z}}\frac{\textstyle s_k}{\textstyle \HH(i\,\omega_k)}\,e^{i\,\omega_k\,t}.
\label{YFourier}
\end{equation}
The denominator in (\ref{YFourier}) never vanishes on the imaginary axis (${\cal H}(i\,\omega) \neq 0$), but 
$$ 
\dis \liminf_{|\omega| \rightarrow \infty}|{\cal H}(i\,\omega)|=0,\qquad
\dis \limsup_{|\omega| \rightarrow \infty}|{\cal H}(i\,\omega)|=+\infty, 
$$ 
and hence various cases can occur. We illustrate some situations on particular values of the period of the forcing $T$:
\begin{itemize}
\item $c=+1$
\begin{itemize}
\item if $T=1$, then ${\cal H}(i\,\omega_k)={\cal H}(i\,2\,k\,\pi)=4\,i\,k\,\pi+a+b$, and hence $\displaystyle |{\cal H}(i\,\omega_k)|\rightarrow+ \infty$ as $k\,\rightarrow +\infty$. Consequently, a smoothing effect occurs: $s \in L^2$ leads to $y_p \in H^1$;
\item if $T=2$, then ${\cal H}(i\,\omega_k)={\cal H}(i\,k\,\pi)=i\,k\,\pi\,\left(1+(-1)^k\right)+a+(-1)^k\,b$ and hence ${\cal H}(i\,\omega_k)=a-b$ if $k$ is odd: no smoothing effect. 
\end{itemize}
\item $c=-1$
\begin{itemize}
\item if $T=1$, then ${\cal H}(i\,\omega_k)=a+b$: no smoothing effect;    
\item if $T=2$, then ${\cal H}(i\,\omega_k)=i\,k\,\pi\,\left(1-(-1)^k\right)+a+(-1)^k\,b$ and hence ${\cal H}(i\,\omega_k)=a+b$ if $k$ is even: no  smoothing effect.
\end{itemize} 
\end{itemize}
A worst behavior is expected if $\dis \lim_{|k|\rightarrow +\infty} {\cal H}(i\,\omega_k)=0$, where small divisors may occur. The goal of the next theorem is to determine the sequence of frequencies $\omega_k$ leading to this situation.

\begin{theorem}
We assume $|c|=1$ and $a>|b|$ in (\ref{Characteristic}). If $\displaystyle \lim_{|k|\rightarrow +\infty}h(i\,\omega_k)=0$, then there exists a function $\phi:\,\mathbb{Z}\rightarrow\mathbb{Z}$ and a real sequence $s_k$ such that
\begin{equation}
\begin{array}{l}
\displaystyle
\lim_{|k|\rightarrow +\infty}\left|\phi(k)\right|=+\infty,\qquad \lim_{|k|\rightarrow +\infty} s_k=0,\\
\displaystyle
\omega_k=\Omega(\phi(k))+  \frac{\textstyle d}{\textstyle \Omega(\phi(k))}  +  \frac{\textstyle s_k}{\textstyle \Omega(\phi(k))}
\label{SDomegaK}
\end{array}
\end{equation}
where $\Omega(k)= ( 2 k + (1+c)/2)\pi$ is defined in (\ref{OmegaN}), and 
\begin{equation} 
d=a-c\,b>0.
\label{CoefD}
\end{equation}
Moreover, the small divisor has the same modulus than 
\begin{equation}
h(i\,\omega_k)=-s_k+{\textstyle i}\,\frac{\textstyle a^2- b^2}{\textstyle 2\,\Omega(\phi(k))}\,
+{\cal O}\left(|s_k|+\frac{1}{|\Omega(\phi(k))|}\right)^2.
\label{SmallDiv}
\end{equation}
\label{ThSmallDiv}
\end{theorem}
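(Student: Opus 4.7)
The plan is to reduce the claim to an asymptotic expansion of $h(i\omega)$ near the would-be accumulation points of its imaginary-axis zeros, and then to identify the relation between $\omega_k$ and $h(i\omega_k)$ as the latter tends to zero.

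First, since $a>|b|$, theorems \ref{ThStabDiagP1} and \ref{ThStabDiagM1} guarantee that $h$ has no root on $i\mathbb{R}$, so the continuous map $\omega\mapsto|h(i\omega)|$ is bounded away from zero on every compact subset of $\mathbb{R}$. Consequently $h(i\omega_k)\to 0$ forces $|\omega_k|\to+\infty$ (after discarding finitely many exceptional indices). Dividing (\ref{Characteristic}) by $i\omega_k$ then yields
\[
\frac{h(i\omega_k)}{i\omega_k}=e^{i\omega_k}+c+\frac{a\,e^{i\omega_k}+b}{i\omega_k}\longrightarrow 0,
\]
hence $e^{i\omega_k}\to -c$. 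The real solutions of $e^{i\omega}=-c$ are exactly the points $\Omega(n)=(2n+(1+c)/2)\pi$, so I can write $\omega_k=\Omega(\phi(k))+\epsilon_k$ with $\phi(k)\in\mathbb{Z}$ chosen as the nearest index, $\epsilon_k\to 0$, and $|\phi(k)|\to+\infty$.

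Second, I would expand $h(i\omega_k)$ using $e^{i\omega_k}=-c\,e^{i\epsilon_k}=-c\bigl(1+i\epsilon_k-\frac{1}{2}\epsilon_k^2+O(\epsilon_k^3)\bigr)$. Collecting real and imaginary parts, and keeping only the orders compatible with the expected scale $\epsilon_k=O(1/|\Omega(\phi(k))|)$, one gets
\[
\Re\, h(i\omega_k)=c\,\Omega(\phi(k))\,\epsilon_k-(ac-b)+O(\epsilon_k^2),
\]
\[
\Im\, h(i\omega_k)=-ac\,\epsilon_k+\frac{c}{2}\,\Omega(\phi(k))\,\epsilon_k^{\,2}+O\bigl(|\Omega(\phi(k))|\,\epsilon_k^3\bigr).
\]
Applying $h(i\omega_k)\to 0$ to the leading real part forces $c\,\Omega(\phi(k))\,\epsilon_k\to ac-b$, i.e.\ $\Omega(\phi(k))\,\epsilon_k\to d$ with $d=a-cb>0$ (positivity from $a>|b|$). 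This justifies the parametrization $\Omega(\phi(k))\,\epsilon_k=d+s_k$ with $s_k\to 0$, which is exactly (\ref{SDomegaK}).

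Finally, substituting $\epsilon_k=(d+s_k)/\Omega(\phi(k))$ back into both expansions and invoking the algebraic identity $d(d-2a)=-(a^2-b^2)$ (immediate from $d=a-cb$ and $c^2=1$) produces (\ref{SmallDiv}) up to a sign convention on $s_k$ absorbing a factor $c=\pm 1$. The main obstacle will be the uniform control of the remainders: one must verify that every higher-order term (for instance $|\Omega(\phi(k))|\epsilon_k^3$, $ac\,\epsilon_k^2$, or cross terms like $s_k/|\Omega(\phi(k))|$) is indeed $O\bigl((|s_k|+1/|\Omega(\phi(k))|)^2\bigr)$. This follows from the a priori scale $\epsilon_k=O(1/|\Omega(\phi(k))|)$ obtained in the second step, but it must be tracked consistently throughout the final expansion.
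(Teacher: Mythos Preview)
Your proposal is correct and follows essentially the same route as the paper: first argue that $h$ has no imaginary root (the paper does this directly from $|e^{\lambda}|<1$ rather than quoting the stability theorems, but the content is the same), deduce $|\omega_k|\to\infty$ and $e^{i\omega_k}\to -c$, write $\omega_k=\Omega(\phi(k))+\epsilon_k$, Taylor–expand $h(i\omega_k)$ in $\epsilon_k$, and read off $\Omega(\phi(k))\epsilon_k\to d$ and then (\ref{SmallDiv}). The only cosmetic difference is that the paper expands the function ${\cal H}(i\omega_k)=e^{-i\omega_k}h(i\omega_k)$ (which it still labels $h$), keeping the computation complex, while you expand $h$ itself and split into real and imaginary parts; this is precisely the unimodular factor $-c$ you flagged, and your identity $d(d-2a)=-(a^2-b^2)$ is exactly the simplification the paper performs as $d^2/2+cbd=(a^2-b^2)/2$.
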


\begin{proof}
Two steps are involved:

\noindent
\underline{Step 1: $h(i\,\omega_k)\neq 0$}. Under the hypothesis $a>|b|$, the property 1 of proposition \ref{PropCharacteristic} can be made more precise. The characteristic roots (\ref{Characteristic}) satisfy
\begin{equation}
(\lambda+a)\,e^\lambda=-(c\,\lambda+b).
\end{equation}
The equality $\lambda+a=0$ implies $b=c\,a$ which is impossible when $|b|<a$. As a consequence, one obtains
\begin{equation}
e^\lambda=-\frac{\textstyle c\,\lambda+b}{\textstyle \lambda+a}\,\Rightarrow \,\left|e^{\lambda}\right|=\frac{\textstyle }{\textstyle }<1,
\end{equation}
which proves that $\Re(\lambda)<0$. The characteristic function $h$ therefore never vanishes on the imaginary axis.


\noindent
\underline{Step 2: asymptotic expansion of $\omega_k$}. From (\ref{Characteristic}), it follows
\begin{equation}
\frac{\textstyle h(i\,\omega_k)}{\textstyle i\,\omega_k}=1+c\,e^{-i\,\omega_k}+{\cal O}\left(\frac{\textstyle 1}{\textstyle \omega_k}\right).
\end{equation}
By hypothesis, $h(i\,\omega_k)\rightarrow 0$ when $|\omega_k|\rightarrow +\infty$. It results that $1+c\,e^{-i\,\omega_k}\rightarrow 0$, and hence there exists $z_k=\Omega(\phi(k))$ and $r_k\rightarrow 0$ so that
\begin{equation}
\omega_k=z_k+r_k,
\label{Rk1}
\end{equation}
with $\Omega_k$ defined in (\ref{OmegaN}). Injecting (\ref{Rk1}) into (\ref{Characteristic}) and using $e^{\pm i\,z_k}=-c$, we obtain
\begin{equation}
\begin{array}{lll}
\displaystyle
h(i\,\omega_k) &=& \displaystyle i\,(z_k+r_k)\,\left(1-e^{-i\,r_k}\right)+a-c\,b\,e^{-i\,r_k},\\
[6pt]
&=& \displaystyle i\,(z_k+r_k)\,\left(i\,r_k+\frac{\textstyle r_k^2}{\textstyle 2}+{\cal O}(r_k^3)\right)+a-c\,b\,\left(1-i\,r_k+{\cal O}(r_k^2)\right),\\
[10pt]
&=& \displaystyle -z_k\,r_k+i\,c\,b\,r_k+a-c\,b+{\cal O}\left(z_k\,r_k^2\right)+{\cal O}\left(r_k^2\right).
\end{array}
\label{HomegaK}
\end{equation}
Since $h(i\,\omega_k)\rightarrow 0$, there exists $s_k\rightarrow 0$ such that
\begin{equation}
r_k=\frac{\textstyle d}{\textstyle z_k}+\frac{\textstyle s_k}{\textstyle z_k},
\label{Rk2}
\end{equation}
with $d$ defined in (\ref{CoefD}). Putting (\ref{Rk2}) into the second line of (\ref{HomegaK}) gives
$$ 
h(i\,\omega_k)=-s_k+\frac{\textstyle i}{\textstyle z_k}\,\left(\frac{\textstyle d^2}{\textstyle 2}+c\,b\,d\right)+{\cal O}\left(\frac{\textstyle 1}{\textstyle z_k^2}\right)+{\cal O}\left(\frac{\textstyle s_k}{\textstyle z_k}\right)+{\cal O}(s_k^2).
$$
Notice that $d^2/2+c\,b\,d=d\,(d/2+c\,b)=d\,((a-c\,b)/2+c\,b)=d\,(a+c\,b)/2=(a-c\,b)\,(a+c\,b)/2=( a^2-c^2\,b^2)/2>0$, which concludes the proof.
\end{proof}

Theorem \ref{ThSmallDiv} states that the small divisors problem occurs when the spectrum of the source $s$ approaches the asymptotic values of the characteristic roots in the way (\ref{SDomegaK}). If $s$ is $T$-periodic, then the solution of (\ref{NDDElinearS}) looses accordingly at most one derivative.


\section{Existence of periodic solutions}\label{SecExistence}

We consider again the full nonlinear non autonomous NDDE (\ref{ToyModel}): indeed, some results for the nonlinear case are based on the linear analysis performed along section \ref{SecLinear}. 
We denote by $H^1_{\sharp}$ the Sobolev space of T-periodic functions with derivatives in $L^2$.

Two cases are distinguished, depending on the period $T$ of the source. If $1\,/\,T \in \mathbb{N}$, then general results can be proved whatever the amplitude of the solution. For a much larger set of periods and for small sources, results can be obtained under a condition on the pulsation $\omega =\frac{2\pi}{T}$. This condition  is equivalent to a Diophantine condition on $T$. We finish the section by some remarks and open problems about this Diophantine condition. 

\subsection{Periods $\dis T=1\,/\,n$}\label{SecExistenceNT1}
                                                                                                  
\begin{theorem}
Properties (\ref{TMhyp}) and (\ref{H1H2}) are assumed. If $n\,T=1$, $n\in \mathbb{N}$,   
 and the source $s(.)$ belongs to $H^1_\sharp$, then there exists a unique $T$-periodic solution $y \in  H^1_\sharp $ to (\ref{ToyModel}), which satisfies the non autonomous ODE
\begin{equation}
(1+c)\,y^{'}(t)+f(y(t))+g(y(t))=s(t).
\label{ThNT1ODE}
\end{equation}
\label{ThNT1}
Furthermore, if $c=1$, we can take the source $s(.)\in L^2_\sharp$.
\end{theorem}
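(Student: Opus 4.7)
The plan starts from a key observation: if $y$ is $T$-periodic with $n\,T=1$ for some $n\in\mathbb{N}$, then $y(t-1)=y(t-n\,T)=y(t)$ and similarly $y'(t-1)=y'(t)$, so the NDDE (\ref{ToyModel}) collapses pointwise into the purely ``local'' equation (\ref{ThNT1ODE}). The theorem therefore reduces to proving existence and uniqueness of a $T$-periodic $H^1$ solution of $(1+c)\,y'+f(y)+g(y)=s$, which splits cleanly into the algebraic case $c=-1$ and the genuinely differential case $c=+1$.

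For $c=-1$, the equation becomes $f(y(t))+g(y(t))=s(t)$. Assumptions (\ref{TMHyp2}), (\ref{H1}) and (\ref{H2}) give $(f+g)'(y)\geq(1-\cc)\,f'(y)>0$ and $(f+g)(y)\to\pm\infty$ as $y\to\pm\infty$, so $f+g:\mathbb{R}\to\mathbb{R}$ is a $C^1$-diffeomorphism. The unique candidate is $y(t)=(f+g)^{-1}(s(t))$. Since $s\in H^1_\sharp$ is bounded and continuous, so is $y$; the chain rule then yields $y'(t)=s'(t)/(f+g)'(y(t))\in L^2$ because $(f+g)'$ is bounded below on the compact range of $y$. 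Hence $y\in H^1_\sharp$, and uniqueness is immediate from pointwise invertibility.

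For $c=+1$, the reduced equation $2\,y'=s(t)-(f+g)(y)$ is a first-order dissipative ODE with Carath\'eodory right-hand side. I would use a Poincar\'e-map strategy: first establish that solutions remain in some interval $[-M,M]$ (for large $M$ the sign of $(f+g)(\pm M)$ dominates $s$, giving pointwise invariance in the $H^1$ case, while for $L^2$ sources a similar bound follows from multiplying the ODE by $y$ and using Cauchy--Schwarz together with the coercivity $y\,(f+g)(y)\geq 0$). The Poincar\'e map $P:y_0\mapsto y(T;y_0)$ is then well defined from $[-M,M]$ into itself. Subtracting two solutions and applying the mean value theorem gives
\[
\frac{d}{dt}(y_1-y_2)^2 \;=\; -(y_1-y_2)\bigl((f+g)(y_1)-(f+g)(y_2)\bigr) \;\leq\; -\delta\,(y_1-y_2)^2,
\]
where $\delta>0$ is a lower bound for $(f+g)'$ on $[-M,M]$. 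Thus $P$ is a strict contraction, Banach's theorem supplies a unique fixed point $y_0^\star$, and extending $y(\cdot;y_0^\star)$ periodically yields the desired solution. Regularity is then automatic: $y$ is bounded and $y'=\frac{1}{2}(s-(f+g)(y))$ inherits the integrability of $s$, so $s\in L^2_\sharp$ already forces $y\in H^1_\sharp$, covering both the main claim and the refined $c=+1$ statement.

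The step I expect to require the most care is the $L^\infty$ a priori bound in the $c=+1$, $s\in L^2_\sharp$ setting: without pointwise values of $s$, the intuitive ``$y'$ has the right sign at $y=\pm M$'' argument must be replaced by an integrated dissipativity estimate over one period. Once such a uniform bound is secured, the remaining ingredients (Carath\'eodory existence on $[0,T]$, the contraction estimate, extension by periodicity) are routine and the two cases $c=\pm 1$ fit into a single unified framework.
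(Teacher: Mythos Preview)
Your proposal is correct and follows essentially the same approach as the paper: the key observation that $nT=1$ forces $y(t-1)=y(t)$ and hence reduces the NDDE to the ODE \eqref{ThNT1ODE}, followed by inverting $f+g$ when $c=-1$ and invoking standard dissipative ODE theory when $c=+1$. The paper is terser---for $c=+1$ it simply cites \cite{JuLo12}, Theorem~5-1, rather than spelling out the Poincar\'e-map contraction---but your sketch of that argument is sound, and you have correctly flagged the $L^\infty$ a~priori bound for $L^2_\sharp$ sources as the one place requiring genuine care.
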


In other words, if the delay $1$ is a period of the source, there exists only one periodic solution for the (NDDE) with the same period as the source. 

\begin{proof}
Considering a 1-periodic solution $y$ to (\ref{ToyModel}) gives (\ref{ThNT1ODE}). If $c=+1$, then
\begin{equation}
y^{'}(t)+h(y(t))=\frac{\textstyle s(t)}{\textstyle 2},
\label{ThNT1proofA}
\end{equation}
with $h=(f+g)\,/\,2$. Under the assumptions (\ref{H1H2}), $h$ satisfies $h(0)=0$ and $h^{'}(0)>0$. The existence, uniqueness and asymptotic stability of a periodic solution to (\ref{ThNT1proofA}) follows accordingly; see for instance the proof of theorem 5-1 in \cite{JuLo12}. If $c=-1$, then (\ref{H1H2}) ensures that $s$ belongs to the range of $h$, and hence $y(t)=h^{-1}(s(t))$.
\end{proof}

Theorem \ref{ThNT1} does not ensure the asymptotic stability of the solution to the NDDE (\ref{ToyModel}). For this purpose, an hypothesis of stability of  small solution must be added, as in proposition \ref{PropPeriode}.


\subsection{General periods}\label{SecExistenceGeneral}

Unlike the special cases investigated in section \ref{SecExistenceNT1}, the results obtained in this section involve a much larger set of periods $T$ but for small sources. The next theorem is also valid for almost periodic sources: it suffices to replace the periodic spectrum $\{k\,\omega,\; k \in \mathbb{Z}\}$ by the almost periodic spectrum 
$\{\omega_k,\; k \in \mathbb{Z}\}$ in (\ref{dth1}), (\ref{ConditionDiophant}) below. Quasi-periodicity can occur, for instance, when the source is made of multiple incommensurable periods. Now, we state the main result of existence. 

\begin{theorem}
Let us consider the nonlinear non autonomous NDDE (\ref{ToyModel}) with assumptions (\ref{TMhyp})-(\ref{H1}). Based on the integer $\Omega_k$ (\ref{OmegaN}), the subsequence $\phi$ is chosen such that   
\begin{equation} \label{dth1}
\left|k\,\omega-\Omega(\phi(k))\right|=\min_n\left|k\,\omega-\Omega(n)\right|.
\end{equation}
The source is $T$-periodic, small and belongs to $H^1_{\sharp}$. If the condition 
\begin{equation}
\liminf_k \left|d-\Omega(\phi(k))\times \left(k\,\omega-\Omega(\phi(k))\right)\right|>0
\label{ConditionDiophant}
\end{equation}
is satisfied, then there exists a unique periodic solution $y\in H^1_{\sharp}$. 
\label{ThDiophant}
\end{theorem}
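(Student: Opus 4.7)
The plan is to set up a Banach fixed-point argument in $H^1_\sharp$ around the zero solution, using the linearization at $0$ as the leading-order inverse. Decompose (\ref{ToyModel}) as
\begin{equation*}
\mathcal{L} y = s - N(y), \qquad \mathcal{L} y = y'(t)+c\,y'(t-1)+a\,y(t)+b\,y(t-1),
\end{equation*}
with $a=f'(0)$ and $b=g'(0)$ (so $a>|b|$ by (\ref{H1})), and nonlinear remainder $N(y)=(f(y)-ay)+(g(y(\cdot-1))-b\,y(\cdot-1))$. Because $f,g\in C^1$ vanish at $0$ with the prescribed derivatives, the embedding $H^1_\sharp\hookrightarrow C^0_\sharp$ gives, on the ball $B_r=\{\|y\|_{H^1_\sharp}\leq r\}$, both $\|N(y)\|_{H^1_\sharp}\leq \varepsilon(r)\,\|y\|_{H^1_\sharp}$ and a Lipschitz constant $\varepsilon(r)$, with $\varepsilon(r)\to 0$ as $r\to 0^+$.

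The crux is to show that condition (\ref{ConditionDiophant}) makes $\mathcal{L}:H^1_\sharp\to H^1_\sharp$ a bounded isomorphism. Since $\mathcal{L}$ acts on Fourier series by multiplication by the symbol $\mathcal{H}(ik\omega)$ (see (\ref{H})), this reduces to a uniform lower bound $|\mathcal{H}(ik\omega)|\geq \delta>0$. The mode $k=0$ gives $\mathcal{H}(0)=a+b>0$; for bounded nonzero $k$, $h$ has no imaginary zero since $a>|b|$ (step 1 of the proof of Theorem \ref{ThSmallDiv}). For $|k|\to\infty$ I would split according to the size of $r_k:=k\omega-\Omega(\phi(k))$, which satisfies $|r_k|\leq\pi$ by (\ref{dth1}). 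When $|r_k|$ is bounded away from $0$, the identity $e^{-i\Omega(\phi(k))}=-c$ gives $|\mathcal{H}(ik\omega)|\geq|k\omega|\,|1-e^{-ir_k}|-(a+|b|)\to\infty$. When $r_k\to 0$ along a subsequence, expansion (\ref{HomegaK}) reads
\begin{equation*}
\mathcal{H}(ik\omega)=-\bigl(\Omega(\phi(k))\,r_k-d\bigr)+i c b\,r_k+\mathcal{O}\bigl(\Omega(\phi(k))\,r_k^2\bigr)+\mathcal{O}(r_k^2),
\end{equation*}
and (\ref{ConditionDiophant}) bounds the dominant real part from below; the remainder $\mathcal{O}(\Omega(\phi(k))\,r_k^2)=\mathcal{O}(|r_k|\cdot|\Omega(\phi(k))\,r_k|)$ becomes negligible once $|r_k|$ is small enough. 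Taking $\delta$ to be the minimum of these bounds yields $\|\mathcal{L}^{-1}\|_{H^1_\sharp\to H^1_\sharp}\leq 1/\delta$.

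With the inverse in hand, the fixed point is routine. Set $\Phi(y)=\mathcal{L}^{-1}(s-N(y))$, choose $r>0$ so that $\varepsilon(r)/\delta<1/2$, and require $\|s\|_{H^1_\sharp}\leq r\delta/2$ (this is the precise meaning of ``small source''). Then $\Phi$ sends $B_r$ into itself and is a $\tfrac{1}{2}$-contraction, so Banach's theorem yields the unique periodic solution $y\in B_r\subset H^1_\sharp$.

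The principal obstacle is the uniform lower bound on $|\mathcal{H}(ik\omega)|$: one must control the error terms in (\ref{HomegaK}) and stitch together the three regimes of $r_k$ (bounded away from $0$, small with $\Omega(\phi(k))\,r_k$ near $d$, small with $\Omega(\phi(k))\,r_k$ far from $d$) into a single constant $\delta>0$. Once that is done, everything else is standard perturbation around a nonresonant linear problem, in the spirit of the analysis of Section~\ref{SecLinear}.
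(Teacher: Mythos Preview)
Your approach is essentially the paper's: both reduce the problem to a uniform lower bound $\inf_k|\mathcal{H}(ik\omega)|>0$, obtained from the expansion in Theorem~\ref{ThSmallDiv} together with condition~(\ref{ConditionDiophant}), and then perturb around the invertible linearization for small sources (you via a Banach contraction, the paper via the implicit function theorem as in \cite{JuLo12}, which are equivalent here). One minor wording issue: $\mathcal{L}$ is not a bounded map $H^1_\sharp\to H^1_\sharp$ (its symbol can grow like $|k|$ on modes with $r_k$ bounded away from $0$); what your argument actually needs and correctly establishes is that the Fourier multiplier $\mathcal{L}^{-1}$ by $1/\mathcal{H}(ik\omega)$ is bounded on $H^1_\sharp$, which follows immediately from your lower bound $|\mathcal{H}(ik\omega)|\geq\delta$.
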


\begin{proof}
Theorem \ref{ThSmallDiv} and condition (\ref{ConditionDiophant}) imply that $\inf_k | h(i\,\omega_k)|>0$. The operator $s(t)\rightarrow y(t)$ in the linearized NDDE (\ref{NDDElinear}) is thus continuous. The implicit function theorem can therefore be applied, ensuring the existence of a unique solution in a neighborhood of the origin as in \cite{JuLo12}. 
\end{proof}

Note that $\phi(k)\sim \frac{k}{T}\rightarrow \pm \infty$. The condition (\ref{ConditionDiophant}) means that there are no small divisors in the linearized NDDE (\ref{NDDElinearS}). Now, the key issue is to know when (\ref{Diophant}) or (\ref{ConditionDiophant}) are fulfilled. An important case is obtained for rational values of the period, as stated in the next theorem.

\begin{theorem}[Rational period]
The same notations and assumptions are used as in theorem \ref{ThDiophant}. If $T \in \mathbb{Q}$,  then the condition (\ref{ConditionDiophant}) is fulfilled and the conclusion of 
theorem \ref{ThDiophant} holds. 

Furthermore, if $c=1$, let us write $T=p\,/\,q$ with $gcd(p,\,q)=1$. If $p$ is odd, then a small source in $L^2_\sharp$ yields a unique periodic solution $y\in H^1_{\sharp}$.
\label{ThRational}
\end{theorem}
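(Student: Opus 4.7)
The plan is to reduce the Diophantine inequality (\ref{ConditionDiophant}) to an integer arithmetic question, and then to combine it with Theorem~\ref{ThDiophant}. Writing $T=p/q$ with $\gcd(p,q)=1$ gives $\omega=2\pi q/p$, and since $\Omega(n)=(2n+(1+c)/2)\pi$ one immediately obtains
\[
k\,\omega-\Omega(n)\;=\;\frac{\pi}{p}\,N_{k,n},
\]
where $N_{k,n}\in\mathbb{Z}$ depends linearly on $n$ with step $2p$. Hence $N_{k,\phi(k)}:=\min_n|N_{k,n}|$ depends only on $k$ modulo $p$ (or $2p$), so the sequence takes only finitely many values. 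A short case analysis shows that $N_{k,\phi(k)}=0$ can occur only when $c=-1$ and $p\mid k$, or when $c=+1$, $p$ is even, and an appropriate divisibility holds on $k$; otherwise $|N_{k,\phi(k)}|\geq 1$.

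To verify (\ref{ConditionDiophant}), I would split the sequence into two subsequences. Along the (possibly empty) subsequence on which $N_{k,\phi(k)}=0$, the product $\Omega(\phi(k))(k\,\omega-\Omega(\phi(k)))$ vanishes, so the bracket reduces to $|d|=a-c\,b$, which is strictly positive by the standing assumption $a>|b|$. Along the complementary subsequence, $|k\,\omega-\Omega(\phi(k))|\geq \pi/p$, and since $\phi(k)\sim k/T$ forces $\Omega(\phi(k))\sim k\,\omega\to\pm\infty$, the product $|\Omega(\phi(k))(k\,\omega-\Omega(\phi(k)))|$ tends to $+\infty$. In both cases the $\liminf$ is at least $d>0$, so Theorem~\ref{ThDiophant} applies and yields the unique small $T$-periodic $H^1_\sharp$-solution.

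For the smoothing statement, take $c=+1$ and $p$ odd. Then $N_{k,n}=2kq-(2n+1)p$ is odd (even minus odd), so $|N_{k,\phi(k)}|\geq 1$ uniformly in $k$; combined with $\Omega(\phi(k))\sim k\,\omega$ this upgrades the bound to $|\Omega(\phi(k))(k\,\omega-\Omega(\phi(k)))|\gtrsim|k|$. Substituting into the expansion (\ref{SmallDiv}) of $h(i\,\omega_k)$, or equivalently using $|h(i\,\omega)|\sim|\omega(1+e^{-i\omega})|$ at high frequency together with the fact that $|1+e^{-i\omega_k}|$ is comparable to the distance from $\omega_k$ to the nearest odd multiple of $\pi$, gives $|h(i\,\omega_k)|\geq C\,|k|$ for $|k|$ large. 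Reading off the Fourier coefficients $y_k=s_k/h(i\,\omega_k)$ in (\ref{YFourier}), one obtains $\sum(1+k^2)|y_k|^2\lesssim\sum|s_k|^2$, so the linearised operator $\mathcal{L}:H^1_\sharp\to L^2_\sharp$ is boundedly invertible. The implicit function theorem then produces a unique small $H^1_\sharp$-solution of the full nonlinear NDDE for small sources $s\in L^2_\sharp$, following the scheme used in the proof of Theorem~\ref{ThDiophant}. The main obstacle I anticipate is the bookkeeping in the arithmetic case analysis --- locating exactly the residues of $k$ for which $N_{k,\phi(k)}=0$ --- and verifying that the residual constant surviving along that subsequence is precisely $d=a-c\,b$, which is why the stability hypothesis $a>|b|$ is essential.
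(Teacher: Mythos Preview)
Your proposal is correct and follows essentially the same route as the paper: both arguments observe that $v_k:=|k\omega-\Omega(\phi(k))|$ is periodic in $k$ and hence takes finitely many values, split according to whether $v_k=0$ or $v_k>0$, and then check arithmetically that $v_k>0$ for all $k$ precisely when $c=+1$ and $p$ is odd, which yields the $L^2_\sharp\to H^1_\sharp$ smoothing via $|h(i\omega_k)|\gtrsim|k|$. Your explicit integer $N_{k,n}$ is just a repackaging of the paper's $\mathrm{dist}(kq/p-(1+c)/4,\mathbb{Z})$, and your case analysis and conclusion match the paper's exactly.
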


In other words, no small divisors occur when the period of the source is rational. Moreover, when  $c=+1$, periods in the form $T=\frac{2\,m+1}{q}$ ($m,\,q \in \mathbb{Z},\; q \neq 0$) provide a smoothing effect.  

\begin{proof}
Let $v_k$ be $\dis\min_{n\in \mathbb{Z}}\left|k\,\omega-\Omega(n)\right|=\left|k\,\omega-\Omega(\phi(k))\right|$. The condition (\ref{ConditionDiophant}) is fulfilled if there exists $\delta>0$ such that $\dis Z_k=\left|d-\Omega(\phi(k))\,v_k\right| \geq \delta>0$ for $|k|$ large enough. Before examining the existence of $\delta$, we recall that $d>0$ and $\Omega(\phi(k))\sim k\,\omega$. 

The distance to a real subset $A \subset \mathbb {R}$ is denoted: $\dist(x,\,A)=\dis\inf_{a \in A}|x-a|$. From the definitions of $\omega$ (\ref{TMHyp4}) and $\Omega$ (\ref{OmegaN}), it follows: 
\begin{eqnarray*}
v_k &=& 2\,\pi\,\min_{n\in \mathbb{Z}} \left|\frac{k}{T}-\frac{1+c}{4}-n\right|,\\
    &=& 2\,\pi\, \dist\left(\frac{k}{T}-\frac{1+c}{4},\,\mathbb{Z}\right),\\
    &=& 2\,\pi\, \dist\left(k\,\frac{q}{p}-\frac{1+c}{4},\,\mathbb{Z}\right).
\end{eqnarray*}
Since $\dist (x\,\mathbb{Z}) \leq \frac{1}{2}$ for all $x$, it follows $0 \leq v_k \leq \pi$. Moreover, periodicity of $(v_k)$ implies that the sequence $(v_k)$ takes at most $p$ values. Let $0 \leq \underline{v}=\dis \inf_{k \in \mathbb{Z}} v_k=\dis \min_{0\leq k<p} v_k$. If $\underline{v}=0$, then we take $\delta=d$. If $\underline{v}>0$, then any positive $\delta $ works since $Z_k \rightarrow + \infty$. Consequently, the condition (\ref{ConditionDiophant}) is always satisfied when $T$ is rational, ensuring the existence of a unique periodic solution. 
 
Let us now examine deeply the cases where $\underline{v}=0$. This equality occurs in two cases:
\begin{itemize}
\item If $c=-1$, then $\underline{v}=0$. It follows from $v_{k \times p}=2\,\pi\,\dist\left(k\,q,\,\mathbb{Z}\right)=0$.  
\item If $c=+1$, then $\underline{v}=0$ if and only if $p \in 2\,\mathbb{Z}$. This necessary and sufficient condition follows from   
$v_{k}=2\,\pi\,\dist\left(k\,\frac{q}{p}-\frac{1}{2},\,\mathbb{Z}\right)=0$ for some $k\neq 0$. It amounts to say that $k\,\frac{q}{p}-\frac{1}{2}=m$ for some $m \in \mathbb{Z}$, i.e. $q\,/\,p=(2\,m + 1)\,/\,(2\,k)$, and hence $p$ has to be even. 
\end{itemize}  
It proves that $\underline{v}=0$ iff $c=+1$ and $p$ is odd. In this case, the characteristic function ${\cal H}(\lambda)$ in (\ref{H}) satisfies ${\cal H}(\lambda)\equiv\lambda\,(1+c\,e^{-\lambda})+(a+b\,e^{-\lambda})\sim\lambda\,(1+e^{-\lambda})$ when $|k| \rightarrow + \infty$. Since the linearized operator is a Fourier multiplier by the characteristic function, $|{\cal H}(\lambda_k)|\sim|k|$ ensures that the linearized equation is smoothing from $L^2_\sharp$ into $H^1_\sharp$.
\end{proof}


\subsection{About the Diophantine condition}\label{SecDiophant}

Up to now, we have proven that rational periods satisfy the condition (\ref{Diophant}), ensuring the validity of theorem \ref{ThDiophant}. Are there other sets of periods in a similar case ? We have not resolved this question. Here, we only propose few results on this topic. More precisely, we study the complementary of this set, when the small divisors problem occurs. To estimate this complementary set, the following definition is introduced.

\begin{definition}[Diophantine condition ${\cal D}$]
Let $D=d\,/\,(4\,\pi^2)>0$, with $d=f^{'}(0)-c\,g^{'}(0)>0$. The real number $\theta\in\mathbb{R}^+$ satisfies the condition ${\cal D}$ if and only if there exists an infinite number of integers $(n,\,k)$ such that
\begin{equation}
\theta=\frac{n}{k}+\frac{1+c}{4\,k}+\frac{D}{\theta}\,\frac{1}{k^2}+ o\left(\frac{1}{k^2}\right).
\label{Diophant}
\end{equation}
\label{DefDiophant}
\end{definition}
The next lemma states that the condition (\ref{Diophant}) is exactly the converse of the condition (\ref{ConditionDiophant}).

\begin{lemma} 
The period $T$ satisfies (\ref{ConditionDiophant}) if and only if $\theta=\frac{1}{T}$ does not satisfy (\ref{Diophant}). 
\end{lemma}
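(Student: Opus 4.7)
\medskip

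\noindent\textbf{Proof plan.} The plan is to translate both conditions into the same algebraic language, using $\omega = 2\pi\theta$ (with $\theta = 1/T$) and the explicit form $\Omega(n) = (2n + (1+c)/2)\pi$. A direct substitution gives
\begin{equation*}
d - \Omega(n)\bigl(k\omega - \Omega(n)\bigr)
= 4\pi^2\Bigl[\,D \;-\; \bigl(n + \tfrac{1+c}{4}\bigr)\bigl(k\theta - n - \tfrac{1+c}{4}\bigr)\Bigr],
\end{equation*}
so that $T$ fails (\ref{ConditionDiophant}) is equivalent to the existence of an infinite subsequence of indices $k$ along which, with $n = \phi(k)$,
\begin{equation*}
\bigl(n + \tfrac{1+c}{4}\bigr)\bigl(k\theta - n - \tfrac{1+c}{4}\bigr) \;\longrightarrow\; D.
\end{equation*}
I would then prove the lemma by showing that this last condition is equivalent to (\ref{Diophant}).

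For the forward direction, starting from $(n+\tfrac{1+c}{4})(k\theta - n - \tfrac{1+c}{4}) = D + \varepsilon_k$ with $\varepsilon_k \to 0$, I would first note that because $|k\theta - n - \tfrac{1+c}{4}| \leq 1/2$ by definition of $\phi(k)$ and $D > 0$ is a fixed positive constant, the factor $n + \tfrac{1+c}{4}$ must tend to $+\infty$. Setting $u = n + \tfrac{1+c}{4}$, the identity $u(k\theta - u) = D - \varepsilon_k$ gives $u = k\theta - (D-\varepsilon_k)/u$, whence $u \sim k\theta$ and $1/u = 1/(k\theta) + O(1/k^3)$. Dividing by $k$ and expanding yields
\begin{equation*}
\theta = \frac{n}{k} + \frac{1+c}{4k} + \frac{D}{\theta\,k^2} + o\!\left(\frac{1}{k^2}\right),
\end{equation*}
which is exactly (\ref{Diophant}).

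For the reverse direction, assume an infinite family of pairs $(n,k)$ realizing (\ref{Diophant}). The expansion forces $|k\theta - n - \tfrac{1+c}{4}| = O(1/k)$, hence strictly less than $1/2$ for $k$ large; this is the minimality characterization of $\phi$, so automatically $n = \phi(k)$ for such $k$. Multiplying the two factors and using $n + \tfrac{1+c}{4} = k\theta + O(1/k)$ together with $k\theta - n - \tfrac{1+c}{4} = D/(\theta k) + o(1/k)$, the cross term is $D + o(1)$, so $d - \Omega(\phi(k))(k\omega - \Omega(\phi(k))) \to 0$ along this subsequence and (\ref{ConditionDiophant}) fails. The main obstacle is minor: just keeping the error terms consistent across the two asymptotic expansions (forward and backward) and verifying that $n = \phi(k)$ on the reverse side; both are straightforward once one fixes the scaling $u \sim k\theta$ dictated by $D > 0$.
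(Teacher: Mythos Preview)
Your proposal is correct and follows essentially the same route as the paper: both arguments translate (\ref{ConditionDiophant}) via the explicit formulas $\omega = 2\pi\theta$ and $\Omega(n) = 2\pi\bigl(n+\tfrac{1+c}{4}\bigr)$, and then match the resulting asymptotic expansion against (\ref{Diophant}). The paper's version is terser because it delegates the expansion to Theorem~\ref{ThSmallDiv} (the three limits $|\phi(k)|\to\infty$, $k\omega-\Omega(\phi(k))\to 0$, $\Omega(\phi(k))(k\omega-\Omega(\phi(k)))\to d$), whereas you carry out the algebra directly and check both directions explicitly; your treatment is in fact more complete. One small slip: when you write that $n+\tfrac{1+c}{4}$ ``must tend to $+\infty$'', it should be $|n+\tfrac{1+c}{4}|\to\infty$ (the sign of $k$ is not restricted), but your expansion $u\sim k\theta$ and the subsequent steps go through verbatim with the obvious sign adjustments.
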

\begin{proof}
To prove the equivalence, let us rewrite the condition in theorem \ref{ThSmallDiv} to have a small divisors problem. In this case, $\omega$ is such that there exists $\phi: \mathbb{Z} \mapsto \mathbb{Z}$ satisfying 
\begin{eqnarray*}   
|\phi(k)|                                   & \rightarrow & \infty, \\ 
k\,\omega - \Omega(\phi(k))                 & \rightarrow & 0,      \\
\Omega(\phi(k))\,(k\,\omega-\Omega(\phi(k)) & \rightarrow & d,  
\end{eqnarray*}
when $|k|\rightarrow \infty$. This is equivalent to say that there exists an infinite number of $(k,\,n)\in \mathbb{Z}\times \mathbb{Z}$ such that 
\begin{equation}
\frac{1}{T}=\frac{n}{k}+\frac{1+c}{4\,k}+\frac{D\,T}{k^2}+ o\left(\frac{1}{k^2}\right),
\label{BadSet0}
\end{equation}
with $D =\frac{d}{4 \pi^2}$: consequently, $1\,/\,T$ does not satisfy the Diophantine condition (\ref{Diophant}).
\end{proof}

The size of the set of numbers satisfying the Diophantine condition (\ref{Diophant}) is unknown. To advance on this question, we consider $C\in \mathbb{R}$, and we define the set $E(C)$ as follows: $\theta\in E(C)$ if there exists an infinite number of $(k,\,n) \in \mathbb{Z}\times \mathbb{Z}$ such that 
\begin{eqnarray} 
\theta=\frac{n}{k}+\frac{C}{k^2}+ o\left(\frac{1}{k^2}\right).
\label{BadSet1}
\end{eqnarray}
$E(C)$ is close to the set of reals satisfying (\ref{BadSet0}). Indeed, $\frac{n}{k}+\frac{1+c}{4\,k} =\frac{4\,n+(1+c)}{4\,k} \in \mathbb{Q}$ and $D\,T$ is replaced by $C$ in (\ref{BadSet0}). Moreover, $E(C)$ is bigger than the set of numbers satisfying ${\cal D}$, since the condition (\ref{Diophant}) is more restrictive than (\ref{BadSet1}).

\begin{proposition}
Depending on the value of $C$, we get the following alternative:
\begin{itemize}
\item if $C=0$, then $\mathbb{Q} \subset E(C)$. Moreover, almost all real numbers belong to $E(C)$, but $E(C) \neq \mathbb{R}$; 
\item if $C\neq 0$, then $\mathbb{Q} \cap E(C)=\emptyset$.
\end{itemize}
\label{PropEC}
\end{proposition}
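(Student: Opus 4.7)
The plan is to treat the two prongs of the alternative separately, as they rest on quite different ingredients.

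\medskip

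\noindent\emph{Case $C=0$.} That $\mathbb{Q}\subset E(0)$ is immediate: if $\theta=p/q$, then the exact equality $\theta=(jp)/(jq)$ holds for every $j\in\mathbb{N}$, producing infinitely many pairs $(n,k)=(jp,jq)$ with $|k|\to+\infty$ and zero error term, hence a fortiori of order $o(1/k^2)$. To show that almost every real lies in $E(0)$, I would invoke continued fractions. Let $p_j/q_j$ be the convergents of the continued fraction expansion of an irrational $\theta$ and $a_j$ its partial quotients. The classical estimate $|q_j\theta-p_j|<1/q_{j+1}$ together with $q_{j+1}\geq a_{j+1}q_j$ yields
\begin{equation*}
\bigl|q_j\bigl(q_j\theta-p_j\bigr)\bigr|\;\leq\;\frac{1}{a_{j+1}}.
\end{equation*}
By Khinchin's theorem, the sequence $(a_j)$ is unbounded for Lebesgue-almost every $\theta$, so along a subsequence $a_{j_\ell+1}\to+\infty$, and the pairs $(n_\ell,k_\ell)=(p_{j_\ell},q_{j_\ell})$ witness $\theta\in E(0)$. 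To see that $E(0)\neq\mathbb{R}$, I exhibit a badly approximable number, for instance the golden ratio $\theta=(1+\sqrt{5})/2$. Hurwitz's theorem then gives the lower bound $|\theta-n/k|\geq 1/(\sqrt{5}\,k^2)$ for every $n/k\in\mathbb{Q}$, whence $|k(k\theta-n)|\geq 1/\sqrt{5}$ and no such sequence can tend to $0$.

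\medskip

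\noindent\emph{Case $C\neq 0$.} Suppose for contradiction that $\theta=p/q\in\mathbb{Q}\cap E(C)$ with $\gcd(p,q)=1$. Then there exists a sequence $(n_j,k_j)\in\mathbb{Z}^2$ with $|k_j|\to+\infty$ and
\begin{equation*}
k_j\bigl(k_j\theta-n_j\bigr)\;\longrightarrow\;C.
\end{equation*}
But $k_j(k_j\theta-n_j)=k_j(k_jp-n_jq)/q$ lies in $(k_j/q)\,\mathbb{Z}$, so it is either zero or of modulus at least $k_j/q\to+\infty$. In neither case can it converge to the nonzero real $C$, a contradiction, so $\mathbb{Q}\cap E(C)=\emptyset$.

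\medskip

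The only genuinely nontrivial step is the measure-theoretic claim that $E(0)$ has full Lebesgue measure: this rests on metric Diophantine approximation (the almost-everywhere unboundedness of continued-fraction partial quotients, via Khinchin's theorem). The remaining three conclusions reduce to direct manipulations of the defining relation.
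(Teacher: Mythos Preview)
Your proof is correct and follows the same overall architecture as the paper's: the same case split, the golden ratio as the witness that $E(0)\neq\mathbb{R}$, and an integer-arithmetic argument for $C\neq 0$. The one substantive difference is in the full-measure claim for $E(0)$. The paper invokes Khintchine's metric theorem directly, choosing an approximation function $\phi(q)\sim 1/(q\ln q)$ so that $\sum\phi(q)=+\infty$ forces, for almost every $\theta$, infinitely many rationals $p/q$ with $|\theta-p/q|=o(1/q^{2})$. You instead work through the continued-fraction expansion and use only the almost-sure unboundedness of the partial quotients, which via $|q_j(q_j\theta-p_j)|\leq 1/a_{j+1}$ yields the same conclusion. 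Your route is more elementary---unboundedness of the $a_j$ is a lighter fact than the full Khintchine divergence theorem---and exposes the mechanism; the paper's route is shorter to state once the theorem is quoted.

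Two minor points of wording, neither affecting validity. What you attribute to ``Hurwitz's theorem'' is really the statement that the golden ratio is \emph{badly approximable}; Hurwitz's theorem is the upper-bound result that every irrational has infinitely many approximants with error below $1/(\sqrt{5}\,q^{2})$. The constant $1/\sqrt{5}$ you write is only the $\liminf$ of $k|k\theta-n|$, not a uniform lower bound, but any positive constant suffices here. For $C\neq 0$, your observation that $k_j(k_j\theta-n_j)\in (k_j/q)\,\mathbb{Z}$ is either zero or of modulus at least $|k_j|/q$ is a slight sharpening of the paper's version, which argues that the integer $k_jp-n_jq$ tends to $0$ and hence eventually vanishes; same idea, cleaner packaging.
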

\begin{proof}  
We examine successively the two cases:
\begin{itemize} 
\item Case $C=0$. Clearly all rational numbers are in $E(0)$.

We recall a Khintchine's theorem on Diophantine approximation \cite{Khintchine64,Dodson09}. If $\phi$ is a non-increasing function, $\phi: \;  \mathbb{N}^* \mapsto ]0,+\infty[$ and if $\sum_{n>0} \phi(n)=+\infty$ then, for almost all real numbers, there are infinitely  many rational $\dis p\,/\,q$ such that $\dis\left|x-p\,/\,q\right|<\phi(q)\,/\,q$. 

It suffices to take $\dis \phi(n-1)=1\,/\,(n\ln(n))$ to conclude that almost all real numbers are in $E(0)$. Indeed, we have 
$$
\dis \frac{\phi(q)}{q} \sim \frac{1 }{q^2 \ln q}=o\left(\frac{1}{q^2}\right).
$$
Notice that $E(0) \neq \mathbb{R}$. For instance, the golden ratio $\dis (1+\sqrt{5})\,/\,2$ does not belong to $E(0)$, see \cite{Khintchine64}.  

\item Case $C \neq 0$: let $\theta=p\,/\,q$ be in $E(C)$. From (\ref{BadSet1}), it follows that
$k\,p-n\,q=\frac{C\,q}{k}+o(k^{-1})$, and hence $k\,p-n\,q$ is a convergent sequence of integers towards 0. Such a sequence of integers is equal to $0$ for $k$ large enough. But this is impossible since $C \neq 0$. Consequently, $\mathbb{Q} \cap E(C)=\emptyset$.
\end{itemize}
\end{proof}

 
\section{Conclusion}\label{SecConclu}

The main results of this article are summed up as follows.
\begin{itemize}
\item theorem \ref{ThZero}, corollary \ref{CoroD-f} and corollary \ref{CoroD-fg}: asymptotic stability of constant solutions in the nonlinear case is proven for a constant forcing. The results are valid whatever the amplitude of the source;
\item proposition \ref{PropPeriode}: asymptotic stability of a periodic source under a periodic non-constant forcing and in the nonlinear case is obtained under two assumptions (stability and small amplitude of the source);
\item theorems \ref{ThStabDiagP1} and \ref{ThStabDiagM1}: stability diagram is analyzed whatever the coefficients of the linearized NDDE;
\item theorem \ref{ThNT1}: existence of a periodic solution to the nonlinear NDDE is proven whatever the amplitude of the source but for particular values of the period. Theorem \ref{ThDiophant} considers a much larger set of periods, but under the assumption of small source and a Diophantine condition. Theorem \ref{ThRational} states that rational periods satisfy this condition.
\end{itemize}
Deeper analysis is required about the set of real numbers not satisfying (\ref{Diophant}). Proposition \ref{PropEC} is a first step in this direction, but the measure of the set involved remains an open question.


\section*{Acknowledgments}

This research was supported by the GdR 2501, CNRS, France. The authors are grateful to Thomas Erneux for fruitful discussions about neutral equations and essential instability in Peyresq summer school 2011. We wish also to thank G\'erard Iooss for his insights about small divisors.


\end{document}